\newcommand{\norm}[1]{\left\lVert#1\right\rVert}
\DeclareMathOperator{\Lip}{Lip}
\newcommand{\dist}{\operatorname{dist}}
\newtheorem{theorem}{Theorem}[section]
\newtheorem{proposition}[theorem]{Proposition}
\newtheorem{corollary}[theorem]{Corollary}
\newtheorem{fact}[theorem]{Fact}
\newtheorem{question}[theorem]{Question}
\theoremstyle{definition}
\newtheorem{definition}[theorem]{Definition}
\theoremstyle{remark}
\newtheorem{remark}[theorem]{Remark}
\newtheorem{example}[theorem]{Example}
\def\<{\langle}
\def\>{\rangle}
\def\dU{d_\mathcal U}
\def\bx{\bar{x}}
\def\by{\bar{y}}
\def\bf{\bar{f}}
\def\U{\mathcal{U}}
\def\R{\mathbb{R}}
\def\to{\rightarrow}
\begin{document}

\title[Lipschitz-free spaces, ultraproducts, and finite representability]{Lipschitz-free spaces, ultraproducts, and finite representability of metric spaces}

\author[Luis C. Garc\'ia-Lirola]{Luis C. Garc\'ia-Lirola}
\address[L. Garc\'ia-Lirola]{Departamento de Matem\'aticas, Universidad de Zaragoza, 50009, Zaragoza, Spain}
\email{luiscarlos@unizar.es}

\author[G. Grelier]{ Guillaume Grelier }
\address[G. Grelier]{Universidad de Murcia, Departamento de Matem\'aticas, Campus de Espinardo 30100 Murcia, Spain} \email{g.grelier@um.es}

\keywords{Lipschitz-free space; Transportation cost space; Metric space; Ultraproduct; Finite representability; Cotype}

\subjclass[2020]{
Primary 46B08, 
46B07  
Secondary 54E50 
}

\maketitle

\begin{abstract} 
We study several properties and applications of the ultrapower $M_{\mathcal U}$ of a metric space $M$. We prove that the Lipschitz-free space $\mathcal F(M_{\mathcal U})$ is finitely representable in $\mathcal F(M)$. We also characterize the metric spaces that are finitely Lipschitz representable in a Banach space as those that biLipschitz embed into an ultrapower of the Banach space. Thanks to this link, we obtain that if $M$ is finitely Lipschitz representable in a Banach space $X$, then $\mathcal F(M)$ is finitely representable in $\mathcal F(X)$. We apply these results to the study of cotype in Lipschitz-free spaces and the stability of Lipschitz-free spaces and spaces of Lipschitz functions under ultraproducts. 
\end{abstract}

\section{Introduction}
Ultraproducts of Banach spaces are a very powerful tool to study local properties of Banach spaces as well as in the non-linear theory (see e.g. \cite{Heinrich, HeinrichMankiewicz, Sims}). In fact, a Banach  space $X$ is finitely representable in a Banach space $Y$ if and only if it is linearly isometric to a subspace of an ultrapower of $Y$. Many relevant notions have been characterized in terms of finite representability. For example, a Banach space $X$ has non-trivial Rademacher type (resp. cotype) if and only if $\ell_1$ (resp. $\ell_\infty)$ is not finitely representable in $X$ (see e.g. \cite{AK}). Moreover, the concept of super-reflexivity introduced by James in \cite{James} is another important example that can be characterised in terms of ultraproducts.

In this paper, we consider the notion of  ultraproduct of metric spaces (which is a generalization of the corresponding one for Banach spaces). We apply it to obtain an ultraproduct characterization of the metric spaces that are finitely Lipschitz representable (in the sense introduced by Lee, Naor and Peres \cite{LNP}) in a Banach space. Also, we analyze the relation between finite Lipschitz representability of metric spaces and finite representability of the corresponding Lipschitz-free spaces. These spaces (also called Arens-Eells spaces and transportation cost spaces) have become a very active research topic due to their applications in Non-Linear Analysis \cite{GK}, as well as Computer Science  and Optimal Transport. 

More precisely, given a metric space $M$ and an ultrafilter $\U$, 
we prove that the Lipschitz-free space on the ultrapower of $M$, $\mathcal F(M_{\U})$, is linearly isometric to a subspace of the ultrapower of the Lipschitz-free spaces, $\mathcal F(M)_{\U}$ (see Theorem \ref{th:isometry_lipfree}). In particular, $\mathcal F(M_{\U})$ is finitely representable in $\mathcal F(M)$. Also, we prove that a metric space $M$ is finitely Lipschitz representable into a Banach space $X$ if and only if $M$ biLipschitz embeds in an ultrapower of $X$ (Theorem~\ref{th:finrepBan}). As a consequence we obtain that, in such a case, $\mathcal F(M)$ is finitely representable in $\mathcal F(X)$. This result has some consequences on the cotype of Lipschitz-free spaces. For instance, the following dichotomy holds: either $\mathcal F(\ell_2)$ has non-trivial cotype or $\mathcal F(X)$ does not have cotype for any infinite-dimensional Banach space $X$. Finally, although several classes of Banach spaces (as Banach lattices, C*-algebras, and $C(K)$-spaces) are known to be stable by ultraproducts, we show that $(\mathcal F(M))_\U$ is not isomorphic to any Lipschitz-free space whenever $M$ is an infinite metric space and $\U$ is countably incomplete. We then compare the stability of $\Lip_0(K)$ and $\mathcal C(K)$ under ultraproducts and remark some similarities and differences between them.

The structure of the paper is the following. In the next section, we introduce the fundamental properties of ultraproducts of metric spaces and Lipschitz-free spaces used in this document. The main goal of Section \ref{sec:ultraLip} is to prove that if $M$ is a metric space and $M_\U$ is one of its ultrapowers then $\mathcal F(M_\U)$ is finitely representable in $\mathcal F(M)$. This will be an important tool in the rest of the paper. Section \ref{sec:finrep} links the ultraproducts of metric spaces with the notion of finite representability. Some applications to the cotype of free spaces are contained in Section \ref{sec:cotype}. Finally, in Section \ref{sec:stability} we analyze the stability of the class of Lipschitz-free spaces and spaces of Lipschitz functions under ultraproducts. 
Our notation is standard and follows textbooks such as \cite{AK, BST}.

\bigskip

\section{Notation and basic properties}\label{sec:notation}

\subsection{Ultraproduct of metric spaces} An excellent reference on this topic is a revised unpublished version of \cite{Sims}. Since that version might not be available to the reader, we have chosen to include here the necessary definitions and properties. Let $I$ be any infinite set. An ultrafilter on $I$ is called \emph{countably incomplete} (in short CI) if there exists a sequence $(I_n)_{n\in \mathbb N}$ of elements of $\mathcal U$ such that $I_{n+1}\subset I_n$ for every $n$ and $\bigcap_{n=1}^\infty I_n= \emptyset$. Note that every non-trivial ultrafilter on $\mathbb N$ is countably incomplete.

From now on, $\mathcal U$ will denote a nonprincipal ultrafilter on $I$. Let $\{(M_i, d_i)\}_{i\in I}$ be a family of metric spaces and fix a distinguished point $0_i\in M_i$ for every $i\in I$. Let us consider the set
\[ \ell_\infty (M_i) = \left\{ (x_i)_{i\in I}\in \prod_{i\in I} M_i : \sup_{i\in I} d_i(x_i,0_i)< \infty\right\}. \]
Notice that for every $(x_i)_{i\in I},(y_i)_{i\in I}\in \prod_{i\in I} M_i$ we have $\sup_{i\in I}d_i(x_i,y_i)<\infty$. Therefore, one can consider 
\[ d((x_i)_{i\in I},(y_i)_{i\in I}) := \lim_{\mathcal U,i} d_i(x_i, y_i). \]
It is clear that $d$ is a pseudometric on $\ell_\infty(M_i)$. We consider the equivalence relation given by $(x_i)_{i\in I}\sim (y_i)_{i\in I}$ if and only if $d((x_i)_{i\in I},(y_i)_{i\in I})=0$. We denote \[(M_i)_\U=\ell_\infty(M_i)/\sim\]
and $\pi\colon \ell_\infty(M_i)\to (M_i)_\U$ the canonical projection. Then the expression $$\dU(\bx,\by):=d((x_i)_{i\in I},(y_i)_{i\in I}),$$ for $\bx,\by\in (M_i)_\U$ and $\pi((x_i)_{i\in I})=\bx$, $\pi((y_i)_{i\in I})=\by$, defines a metric on $(M_i)_\U$. For simplicity, we usually omit $\pi$ and we write $(x_i)_\U=\pi((x_i)_{i\in I}$. The metric space $((M_i)_\U, \dU)$ is called the \emph{ultraproduct of the metric spaces} $(M_i)_{i\in I}$. Moreover, if $M_i=M$ and $0_i=0\in M$ for every $i\in I$ then the space $(M_i)_\U$ is called the \emph{ultrapower of the metric space} $M$ and denoted $M_\U$. If the context is clear, we simply write $d$ instead of $d_\U$.

Let us notice that if the spaces $M_i$ are uniformly bounded 
then the definition of $(M_i)_\U$ does not depend on the choice of the distinguished points. In the case that the $M_i$ are normed spaces, we will always consider that the distinguished point is $0\in M_i$ for every $i\in I$ and we recover then the usual definition of ultraproduct for Banach spaces.

The following result summarises several known properties of the ultraproduct of metric spaces.  
We include the proofs (analogous to the Banach case ones) for completeness.

\begin{fact}\label{facts}
\begin{itemize}
\item[(a)] If $0_i\in N_i\subset M_i$ for each $i\in I$, then $(N_i)_\U$ embeds isometrically in $(M_i)_\U$. Moreover, if $\mathcal U$ is CI and $N_i$ is dense in $M_i$ for every $i\in I$ then $(N_i)_\U$ is isometric to $(M_i)_\U$. 
\item[(b)] If $\mathcal U$ is CI, then $(M_i)_\U$ is a complete metric space.
\item[(c)] If the $M_i$ are normed spaces, then $(M_i)_\U$ is a Banach space. 
\item[(d)] $M$ embeds isometrically in $M_\U$. Moreover, if $M$ is a normed space then there exists a linear isometry from $M$ into $M_\U$.
\item[(e)] If $M$ is a proper metric space (that is, closed balls in $M$ are compact sets) then $M_\U$ is isometric to $M$. 
\end{itemize}
\end{fact}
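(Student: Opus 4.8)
The plan is to dispatch the five items one by one; none is deep, and each reduces to a short verification once suitable representatives are chosen, mirroring the Banach-space arguments. \emph{Item (a).} Sending the class in $(N_i)_\U$ of a bounded family $(x_i)$ with $x_i\in N_i$ to its class in $(M_i)_\U$ is well defined and distance-preserving, since the ultraproduct metric only records $\limU d_i(x_i,y_i)$, which is insensitive to enlarging the ambient spaces. For the ``moreover'' part I would use that $\U$ is CI: fix a decreasing sequence $(I_n)_n$ in $\U$ with $\bigcap_n I_n=\emptyset$, and for $i\in I$ set $n(i)=\max\{n\ge 1:i\in I_n\}$ if $i\in I_1$ and $n(i)=0$ otherwise (finite, as $\bigcap_n I_n=\emptyset$). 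Given $\bx=(x_i)_\U$, use density to pick $y_i\in N_i$ with $d_i(x_i,y_i)<1/n(i)$ when $n(i)\ge 1$ and $y_i=0_i$ otherwise; then $(y_i)\in\ell_\infty(N_i)$ because $d_i(y_i,0_i)\le 1+\sup_i d_i(x_i,0_i)$, and since $\{i:n(i)\ge n\}\supseteq I_n\in\U$ we get $\limU d_i(x_i,y_i)\le 1/n$ for all $n$, so $\dU(\bx,(y_i)_\U)=0$ and the embedding is onto. \emph{Item (d)} is immediate from the diagonal map $\iota\colon M\to M_\U$, $\iota(x)=(x)_\U$, which is clearly an isometry and is linear when $M$ is normed.

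For \emph{item (b)}, given a Cauchy sequence in $(M_i)_\U$ I would pass to a subsequence $(\bx^k)_k$ with $\dU(\bx^k,\bx^{k+1})<2^{-k}$, put $J_k=\{i:d_i(x^k_i,x^{k+1}_i)<2^{-k}\}\in\U$, and with a witnessing sequence $(I_n)_n$ for CI form the decreasing sets $A_n=I_n\cap J_1\cap\dots\cap J_n\in\U$ with $\bigcap_n A_n=\emptyset$. Put $n(i)=\max\{n:i\in A_n\}$ for $i\in A_1$, and define $x_i=x^{n(i)}_i$ there and $x_i=x^1_i$ elsewhere. For $i\in A_k$ one has $n(i)\ge k$ and $i\in A_j\subseteq J_j$ for all $j\le n(i)$, so the telescoping estimate $d_i(x^k_i,x_i)\le\sum_{j=k}^{n(i)-1}d_i(x^j_i,x^{j+1}_i)\le 2^{-k+1}$ holds; hence $(x_i)\in\ell_\infty(M_i)$ and $\dU(\bx^k,(x_i)_\U)\to 0$, and since a Cauchy sequence with a convergent subsequence converges, $(M_i)_\U$ is complete. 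The one point requiring care here is precisely this bookkeeping: the telescoping sum must only involve indices $j$ with $i\in J_j$, which is exactly what $i\in A_{n(i)}\subseteq A_j\subseteq J_j$ for $j\le n(i)$ guarantees. This is the main (mild) obstacle; everything else is routine.

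For \emph{item (c)}, I would first check that the coordinatewise operations descend to classes and that $\|(x_i)_\U\|:=\limU\|x_i\|$ is a norm (homogeneity and the triangle inequality are transparent). Completeness then follows from the absolutely convergent series criterion: given classes $\bar y^j$ with $\sum_j\|\bar y^j\|<\infty$, choose representatives with $\sup_i\|y^j_i\|\le\|\bar y^j\|+2^{-j}$ by zeroing out the coordinates where $\|y^j_i\|$ is too large (a set not belonging to $\U$), so that $\sum_j y^j_i$ converges in each $M_i$ to some $z_i$ with $(z_i)\in\ell_\infty(M_i)$, and one verifies $\sum_j\bar y^j=(z_i)_\U$. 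Note this argument does not need $\U$ to be CI.

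Finally, for \emph{item (e)}: given $\bx=(x_i)_\U\in M_\U$, all representatives $x_i$ lie in the closed ball of radius $\sup_i d(x_i,0)<\infty$, which is compact because $M$ is proper; hence the ultralimit $x:=\limU x_i$ exists in $M$, and $\dU(\bx,\iota(x))=\limU d(x_i,x)=0$. Thus $\iota$ is onto, so it is an isometric bijection $M\to M_\U$.
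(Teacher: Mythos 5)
Your proof is correct, and items (a), (d) and (e) coincide with the paper's arguments (canonical inclusion plus a CI diagonal choice of $y_i\in N_i$, the constant-sequence embedding, and ultralimits inside a compact ball); your bookkeeping with $n(i)=\max\{n:\ i\in I_n\}$ is just a reformulation of the paper's case split over $I_n\setminus I_{n+1}$. The genuine divergence is in (b) and (c). For (b), the paper first uses (a) to replace each $M_i$ by its completion and then deduces completeness of $(M_i)_\U$ from completeness of $(\ell_\infty(M_i),d_\infty)$ via the surjection $\pi$, justified by the slogan that completeness passes through uniformly continuous surjections; as stated that slogan is not a valid general principle (consider $\arctan\colon\mathbb R\to(-\pi/2,\pi/2)$), and what really makes the reduction work is that Cauchy sequences in the ultraproduct can be lifted after adjusting representatives off sets of $\U$ --- which is exactly what your explicit diagonal $x_i=x^{n(i)}_i$ over $A_n=I_n\cap J_1\cap\dots\cap J_n$ does, and without needing to pass to completions first. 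So your version of (b) is longer but self-contained and, if anything, makes explicit a step the paper glosses over. For (c), the paper simply identifies $(M_i)_\U$ with the quotient $\ell_\infty(M_i)/N_\U$, where $N_\U=\{(x_i)\ :\ \lim_\U\|x_i\|=0\}$ is closed; this is shorter than your absolutely-convergent-series criterion but equivalent in content. One caveat on your closing remark that (c) needs no CI: both your argument (when summing $\sum_j y^j_i$ inside $M_i$) and the paper's (when asserting that $\ell_\infty(M_i)$ is a Banach space) tacitly use completeness of each $M_i$; for merely normed $M_i$ one should first pass to completions via (a), which does use CI (and for a countably complete ultrafilter the conclusion can actually fail for incomplete normed spaces), so your CI-free claim should be read as a statement about families of Banach spaces.
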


\begin{proof}
$(a)$ Consider $f_i\colon N_i\to M_i$ the canonical inclusion for each $i\in I$. Then it is straightforward that $(f_i)_{i\in I}$ defines an isometry from $(N_i)_\U$ into a subset of $(M_i)_\U$. In other to prove the second statement, take $\bx\in (M_i)_{i\in I}$ and fix $(x_i)_{i\in I}$ with $\bx=(x_i)_{i\in I}$. Take a decreasing sequence $(I_n)_{n\in \mathbb N}\subset \mathcal U$ such that $\bigcap_{n=1}^\infty I_n = \emptyset$. We will define $y_i\in N_i$ for each $i\in I$ satisfying that $\lim_{i,\U} d_i(x_i,y_i)=0$, so $\bx=(y_i)_{i\in I}$. If $i\notin I_1$, take $y_i\in M_i$ arbitrary. If $i\in I_n\setminus I_{n+1}$, take $y_i\in N_i$ so that $d_i(x_i,y_i)<1/n$. Since $\bigcap_{n=1}^\infty I_n = \emptyset$, this defines $y_i$ for every $i\in I$. Now notice that
\[ \{i\in I : d_i(x_i,y_i)<1/n\} \supset I_n \in \U \]
and so $\lim_{i,\U}d_i(x_i,y_i)=0$. This shows that $(f_i)_{i\in I}$ is onto, as desired. 

$(b)$ By the previous property, we may assume that $M_i$ is complete for each $i\in I$. Notice that $\pi\colon (\ell_\infty(M_i),d_\infty)\to ((M_i)_\U, \dU)$ is $1$-Lipschitz and onto. Since completeness is preserved by uniformly continuous surjections, we only need to check that $(\ell_\infty(M_i),d_\infty)$ is complete. For that, mimic the proof of the completeness of $\ell_\infty$. 

$(c)$ It is clear that $\dU$ is a norm on $(M_i)_\U$ whenever the $M_i$ are normed spaces. Moreover, $\ell_\infty(M_i)$ is a Banach space and $N_\U=\{(x_i)_{i\in I}\ |\ \lim_\U\|x_i\|=0\}$ is a closed subspace. So $(M_i)_\U=\ell_\infty(M_i)/N_\U$ is a Banach space.

$(d)$ Given $x\in M$, take $x_i=x$ for every $i\in I$. Then $(x_i)_{i\in I}\in \ell_\infty(M)$. Thus $\phi(x):= (x_i)_{i\in I}$ defines an isometry from $M$ into a subset of $M_\U$. For the last statement, notice that the map $\psi$ is a linear operator whenever $M$ is a normed space. 

$(e)$ Given $\bx\in M_\U$, take $(x_i)_{i\in I}$ such that $\bx=(x_i)_{i\in I}$. Then $R=\sup\{d(x_i,0):i\in I\}<\infty$ and so $\{x_i:i\in I\}$ is contained in the compact set $\overline{B}(0,R)$. Therefore, there exists $\psi(\bx):= \lim_{\mathcal U,i} x_i$. Notice that 
\[ d(\psi(\bx),\psi(\by)) = d\left(\lim_{\mathcal U,i} x_i, \lim_{\mathcal U,i} y_i\right) = \lim_{\mathcal U,i} d(x_i,y_i) = \dU(\bx,\by),\]
so $\psi$ defines an isometry from $M_\U$ into $M$. Moreover, given $x\in M$ we have $x=\psi(\phi(x))$ and therefore $\psi$ is onto.

\end{proof}

Note in passing that the ultraproduct of metric spaces is closely related to the Gromov-Hausdorff limit. Indeed, if $M$ is the Gromov-Hausdorff limit of a sequence of pointed proper metric spaces $M_n$, then $M$ is isometric to the ultraproduct $(M_n)_{\mathcal U}$ (see e.g. \cite{BridsonHaefliger}).

\subsection{Lipschitz-free spaces} Let $M$ be a pointed metric space $M$, that is, a metric space with a distinguished point denoted $0$. We will denote by $\Lip_0(M)$ the Banach space of all real-valued Lipschitz functions on $M$ vanishing at $0$, endowed with the norm given by the Lipschitz constant:
\[ \norm{f}=\Lip(f)=\sup\left\{\frac{|f(x)-f(y)|}{d(x,y)}: x,y\in M, x\neq y\right\}\]
For $x\in M$, the linear map $\delta(x) \colon \Lip_0(M) \to \R$ given by $\langle f,\delta(x) \rangle = f(x)$ defines an element of $\Lip_0(M)^*$. 
The \textit{Lipschitz-free space over $M$} (also called Arens-Eells space and transportation cost space) is defined as the closed subspace of $\Lip_0(M)^*$ generated by these evaluation functionals, that is,
\[\mathcal F(M) := \overline{ \operatorname{span}}^{\| \cdot  \|}\left \{ \delta(x) \, : \, x \in M  \right \} \subset \Lip_0(M)^*.\]
The map $\delta$ defines an isometric embedding of $M$ into $\mathcal F(M)$ such that the following fundamental property holds: for every Banach space $X$ and every Lipschitz function $f\colon M\to X$ with $f(0)=0$, there is a unique bounded linear operator $T_f\colon \mathcal F(M)\to X$ such that $T_f\circ \delta =f$ and $\norm{T_f}=\Lip(f)$. It follows in particular that $\mathcal F(M)^*=\Lip_0(M)$. We refer the reader to the monographs \cite{Ostrovskii, Weaver} and the survey \cite{GodefroySurvey} for more properties and applications of these spaces.

\bigskip

\section{Ultraproduct of $\mathcal F(M)$ and $\Lip_0(M)$}\label{sec:ultraLip}

Recall that, for a Banach space $X$, the ultrapower $(X^*)_{\U}$ embeds isometrically into $(X_{\U})^*$ and it is norming for $X_{\U}$. The following result provides an analogous for metric spaces (with Lipschitz functions playing the role of linear functionals). We just need to recall that, given $\lambda\geq 1$, a set $A\subset X^*$ is \textit{$\lambda$-norming for $X$} if $\sup_{x^*\in A\cap B_{X^*}}|x^*(x)|\geq\frac{1}{\lambda}\norm{x}$ for every $x\in X$.

\begin{theorem}\label{th:ultrLip} Let $\U$ be an ultrafilter on a set $I$ and let $(M_i)_{i\in I}$ be a family of metric spaces. Define an operator $T\colon\Lip_0(M_i)_\U\to\Lip_0((M_i)_\U)$ where $T((f_i)_\U)\in \Lip_0((M_i)_\U)$ is the function given by $$T((f_i)_\U)((x_i)_\U)=\lim_{\U,i} f_i(x_i).$$
Then  $T$ is a well-defined linear operator with  $\|T\|\leq 1$ and $T(B_{\Lip_0(M_i)_\U})$ is a 1-norming set for $\mathcal F((M_i)_\U)$.
\end{theorem}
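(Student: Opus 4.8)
The plan is to verify the three claims in order: (i) $T$ is well-defined, (ii) $T$ is linear with $\|T\|\le 1$, and (iii) $T(B_{\Lip_0(M_i)_\U})$ is $1$-norming for $\mathcal F((M_i)_\U)$. For (i), fix a representative $(f_i)_{i\in I}$ with $\sup_i\Lip(f_i)<\infty$ (which holds for elements of the ultrapower, after normalizing the distinguished points so that $f_i(0_i)=0$). Given $\bx=(x_i)_\U$, the values $f_i(x_i)$ are bounded, since $|f_i(x_i)|=|f_i(x_i)-f_i(0_i)|\le \Lip(f_i)d_i(x_i,0_i)$ and both factors are uniformly bounded; hence $\lim_{\U,i}f_i(x_i)$ exists in $\R$. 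One must check this limit is independent of the chosen representatives of both $(f_i)_\U$ and $(x_i)_\U$: if $d_\U(\bx,\bx')=0$ then $\lim_{\U,i}d_i(x_i,x_i')=0$, so $|f_i(x_i)-f_i(x_i')|\le \Lip(f_i)d_i(x_i,x_i')\to_\U 0$; and if $(f_i)_\U=(f_i')_\U$ then $\lim_{\U,i}\Lip(f_i-f_i')=0$, which together with the uniform bound on $d_i(x_i,0_i)$ forces $|f_i(x_i)-f_i'(x_i)|\to_\U 0$. Finally $T((f_i)_\U)$ is Lipschitz: for $\bx,\by\in(M_i)_\U$,
\[
|T((f_i)_\U)(\bx)-T((f_i)_\U)(\by)|=\lim_{\U,i}|f_i(x_i)-f_i(y_i)|\le \lim_{\U,i}\Lip(f_i)\,d_i(x_i,y_i)=\Big(\limU \Lip(f_i)\Big)d_\U(\bx,\by),
\]
and it vanishes at the distinguished point $(0_i)_\U$. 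This simultaneously gives (ii): linearity is immediate from linearity of $\lim_\U$, and the displayed inequality shows $\Lip(T((f_i)_\U))\le \limU\Lip(f_i)=\|(f_i)_\U\|$, i.e.\ $\|T\|\le 1$.

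For (iii), I must show that for every $\mu\in\mathcal F((M_i)_\U)$,
\[
\sup\big\{\langle \mu, T((f_i)_\U)\rangle : (f_i)_\U\in B_{\Lip_0(M_i)_\U}\big\}\ge \|\mu\|.
\]
Since finitely supported elements $\mu=\sum_{k=1}^n a_k\delta(\bx^k)$ (with $\bx^k=(x_i^k)_\U$) are dense in $\mathcal F((M_i)_\U)$, it suffices to handle such $\mu$ and a density/$\varepsilon$ argument then gives the general case. Fix $\varepsilon>0$. By definition of the norm in $\Lip_0((M_i)_\U)^*\supset\mathcal F((M_i)_\U)$, there is $g\in \Lip_0((M_i)_\U)$ with $\Lip(g)\le 1$ and $\langle\mu,g\rangle > \|\mu\|-\varepsilon$. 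The key step is to \emph{pull $g$ back} to a family $(f_i)_{i\in I}$: define $f_i$ only on the finite set $\{x_i^1,\dots,x_i^n, 0_i\}$ by $f_i(x_i^k)=g(\bx^k)$ and $f_i(0_i)=0$, and then extend $f_i$ to all of $M_i$ by a (nonlinear) McShane–Whitney extension preserving the Lipschitz constant on that finite set. For this to produce a legitimate element of $\Lip_0(M_i)_\U$ with $\|(f_i)_\U\|\le 1$, I need $\Lip(f_i|_{\{x_i^k\}\cup\{0_i\}}) \to_\U \Lip(g|_{\{\bx^k\}\cup\{(0_i)_\U\}})\le 1$, which follows because for each pair $k,\ell$,
\[
\frac{|g(\bx^k)-g(\bx^\ell)|}{d_i(x_i^k,x_i^\ell)}\longrightarrow_\U \frac{|g(\bx^k)-g(\bx^\ell)|}{d_\U(\bx^k,\bx^\ell)}\le 1
\]
(and analogously with $\bx^\ell$ replaced by the base point), using that $d_i(x_i^k,x_i^\ell)\to_\U d_\U(\bx^k,\bx^\ell)$; on the $\U$-large set where all these finitely many ratios are below $1+\varepsilon$ we rescale $f_i$ by $(1+\varepsilon)^{-1}$, on the complement we set $f_i=0$. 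Then by construction $T((f_i)_\U)(\bx^k)=\lim_{\U,i}f_i(x_i^k)=(1+\varepsilon)^{-1}g(\bx^k)$, so
\[
\langle\mu, T((f_i)_\U)\rangle=\sum_{k=1}^n a_k (1+\varepsilon)^{-1} g(\bx^k)=(1+\varepsilon)^{-1}\langle\mu,g\rangle>(1+\varepsilon)^{-1}(\|\mu\|-\varepsilon).
\]
Letting $\varepsilon\to 0$ yields the $1$-norming conclusion for finitely supported $\mu$, and density finishes the proof.

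The main obstacle is step (iii), and specifically the pull-back construction: one must be careful that the McShane extensions $f_i$ genuinely assemble into an element of the ultrapower $\Lip_0(M_i)_\U$ (i.e.\ that $\sup_i\Lip(f_i)<\infty$, which the normalization by $(1+\varepsilon)^{-1}$ on the good set and $0$ on the bad set guarantees), and that the value recovered by $T$ on each $\bx^k$ is exactly $g(\bx^k)$ up to the controlled factor — this is where the interchange of $\lim_\U$ with the quotient metric $d_\U$ is used. Everything else (well-definedness, the norm bound, and the reduction to finitely supported measures) is routine and parallels the Banach-space fact that $(X^*)_\U$ embeds norming-ly into $(X_\U)^*$.
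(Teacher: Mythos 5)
Your proposal is correct and follows essentially the same route as the paper: both proofs establish well-definedness and $\|T\|\le 1$ by the same representative-independence estimates, and both prove the norming property by pulling a nearly-norming $g\in\Lip_0((M_i)_\U)$ back to the coordinates via restriction to the finite support, using that $d_i(x_i^k,x_i^\ell)$ is $\U$-eventually within a factor $(1+\varepsilon)$ of $d_\U(\bx^k,\bx^\ell)$, followed by a McShane extension and a density argument. The only cosmetic difference is that you estimate the duality pairing $\langle\mu,T((f_i)_\U)\rangle$ directly after rescaling by $(1+\varepsilon)^{-1}$, whereas the paper phrases the same interpolation step through Kalton's $1$-norming criterion (exact agreement on the finite set with norm inflated by $1+\varepsilon$).
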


\begin{proof} First notice that if 
$(x_i)_{i\in I}= (y_i)_{i\in I}$ in $(M_i)_{\U}$ and $(f_i)_{i\in I}=(g_i)_{i\in I}$ in $(\Lip_0(M_i))_{\mathcal U}$, then
\[ \lim_{\U,i}|f_i(x_i)-g_i(y_i)|\leq \lim_{\U,i}(\norm{f_i-g_i} d(x_i,0) + \norm{g_i} d(x_i,y_i)) = 0. \] So the formula does not depend on the chosen representations. Moreover, if $\bf=(f_i)_\U\in\Lip_0(M_i)_\U$ we have that
\[ |T\bf(\bx)-T\bf(\by)| \leq \norm{\bf} d(\bx,\by) \quad \forall \bx,\by\in (M_i)_{\mathcal U} \]
so $T\bf\in \Lip_0((M_i)_\U)$. Therefore $T$ is well-defined and we have that $\norm{T\bf}\leq \norm{\bf}$ for each $\bar{f}\in \Lip_0(M_i)_{\U}$.

Now, we will prove that $T(B_{\Lip_0(M_i)_{\U}})$ is a $1$-norming set. It is well known (see e.g.  Proposition~3.3 in  \cite{Kalton04}) that for that if suffices to check that given $\varepsilon>0$, a finite subset $A\subset (M_i)_\U$,  $\varepsilon>0$ and  $f\in\Lip_0((M_i)_\U)$, there exists $(f_i)_\mathcal U\in\Lip_0(M_i)_\U$ such that $\|(f_i)_\U\|\leq(1+\varepsilon)\|f\|$ and $T((f_i)_\U)|_{A}=f|_{A}$.

 Let $A=\{(x^j_i)_\U\}_{1\leq j\leq n}$ be a finite set, we may assume that  $0=(0_i)_\U\in A$ and that the $(x^j_i)_\U$ are all different. Fix $\varepsilon>0$ and $f\in\Lip_0((M_i)_\U)$. For $j,j'\in\{1,\ldots,n\}$ distinct, we have that $$I_{j,j'}=\left\{i\in I\ |\ d(x^j_i,x^{j'}_i)>\frac{1}{1+\varepsilon}d((x^j_i)_\U,(x^{j'}_i)_\U)\right\}\in\mathcal U.$$ It follows that $J=\bigcap_{j\neq j'}I_{j,j'}\in\mathcal U$ and then we can assume that $$d(x^j_i,x^{j'}_i)>\frac{1}{1+\varepsilon}d((x^j_i)_\U,(x^{j'}_i)_\U)$$  for all $j\neq j'$ and all $i\in I$. For $i\in I$, define a function $f_i\colon\{x^1_i,\ldots,x^n_i\}\to\mathbb R$ by $f_i(x^j_i)=f((x^j_k)_{\U,k})$ for all $j\in\{1,\ldots,n\}$, note that $f_i(0_i)=0$. If $j\neq j'$, we have that
\begin{align*}
|f_i(x^j_i)-f_i(x^{j'}_i)|&=|f((x^j_k)_{\U,k})-f((x^{j'}_k)_{\U,k})|\\
&\leq \|f\|d((x^j_k)_{\U,k},(x^{j'}_k)_{\U,k})\leq (1+\varepsilon)\|f\|d(x^j_i,x^{j'}_i)    
\end{align*}
proving that $f_i$ is $(1+\varepsilon)\|f\|$-Lipschitz and belongs to $\Lip_0(\{x^1_i,\ldots,x^n_i\})$. Now we extend $f_i$ to a $(1+\varepsilon)\|f\|$-Lipschitz function on $M_i$ and we still denote it by $f_i$. We have that $\|(f_i)_\mathcal U\|\leq (1+\varepsilon)\|f\|$ and $$T((f_i)_\mathcal U)((x^j_i)_\U)=\lim_{\U,i}f_i(x^j_i)=\lim_{\U,i}f((x^j_k)_{\U,k})=f((x^j_k)_{\U,k})$$ for all $j\in\{1,...,n\}$, proving that $T((f_i)_\mathcal U)$ and $f$ coincide on $A$. A standard argument using the denseness of finitely supported elements in $\mathcal F((M_i)_\U)$ gives that  $T(B_{\Lip_0(M_i)_{\U}})$ is 1-norming for $\mathcal F((M_i)_{\U})$, as desired.
\end{proof}

The operator $T$ defined in the previous theorem is not injective, in general. Indeed, we have the following characterization. 

\begin{proposition}\label{prop:unifdisc}
Let $M$ be a metric space and let $\U$ be a CI ultrafilter on a set $I$. Let $T\colon \Lip_0(M)_\U\to\Lip_0(M_\U)$ defined in Theorem~\ref{th:ultrLip}. The following assertions are equivalent:
\begin{enumerate}
    \item[(i)] $M$ is uniformly discrete and bounded;
    \item[(ii)] $T$ is injective;
    \item[(iii)] $T$ is an isometry.
\end{enumerate}
\end{proposition}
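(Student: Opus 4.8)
The plan is to prove the cycle $(i) \Rightarrow (iii) \Rightarrow (ii) \Rightarrow (i)$, since $(iii) \Rightarrow (ii)$ is trivial and $(i) \Rightarrow (iii)$ is where the real content lies. For $(i) \Rightarrow (iii)$, suppose $M$ is $\theta$-uniformly discrete (i.e. $d(x,y) \geq \theta$ for distinct $x,y$) and has diameter $\leq D$. Given $\bar f = (f_i)_\U \in \Lip_0(M)_\U$ with $\norm{\bar f} = 1$, I want to produce $\bx, \by \in M_\U$ with $|T\bar f(\bx) - T\bar f(\by)|$ close to $d(\bx,\by)$. Since $\norm{f_i}$ is close to $1$ for $\U$-many $i$, for each such $i$ pick $a_i, b_i \in M$ with $|f_i(a_i) - f_i(b_i)| \geq (1-\varepsilon) d(a_i,b_i)$; note $\theta \leq d(a_i,b_i) \leq D$, so the sequences $(a_i)$, $(b_i)$ lie in $\ell_\infty(M)$ and define points $\bx = (a_i)_\U$, $\by = (b_i)_\U$ of $M_\U$ with $d(\bx,\by) = \limU d(a_i,b_i) \geq \theta > 0$. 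Then $|T\bar f(\bx) - T\bar f(\by)| = \limU |f_i(a_i) - f_i(b_i)| \geq (1-\varepsilon)\limU d(a_i,b_i) = (1-\varepsilon) d(\bx,\by)$, which forces $\norm{T\bar f} \geq 1 - \varepsilon$; letting $\varepsilon \to 0$ and combining with $\norm{T} \leq 1$ from Theorem~\ref{th:ultrLip} gives $\norm{T\bar f} = \norm{\bar f}$, so $T$ is an isometry. (One should first dispose of the case $M$ a single point, where both spaces are trivial.)

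For $(ii) \Rightarrow (i)$ I argue by contrapositive: if $M$ is not uniformly discrete and bounded, I construct a nonzero element of $\ker T$. Consider first the case that $M$ is not uniformly discrete, so there are distinct $x_n, y_n \in M$ with $0 < d(x_n, y_n) \to 0$. Using a CI witnessing sequence $(I_n)_\U$ with $\bigcap_n I_n = \emptyset$, define $f_i$ for $i \in I_n \setminus I_{n+1}$ to be (an extension to $M$ of) the function on $\{0, x_n, y_n\}$ sending $x_n \mapsto 0$, $y_n \mapsto d(x_n, y_n)$; this is $1$-Lipschitz but has $f_i(y_n) - f_i(x_n) = d(x_n,y_n)$. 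Then $(f_i)_\U$ has norm $1$ (the local Lipschitz constant stays $1$), yet for any $\bx = (a_i)_\U$, $\by = (b_i)_\U$ in $M_\U$ the difference $T(f_i)_\U(\bx) - T(f_i)_\U(\by) = \limU(f_i(a_i) - f_i(b_i))$ vanishes because $|f_i(a_i) - f_i(b_i)| \leq d(a_i, b_i)$ and the oscillation of $f_i$ on any ball of radius bounded away from $d(x_n,y_n)$ collapses — more carefully, one checks $|f_i(p) - f_i(q)| \to 0$ whenever $d(p,q)$ stays bounded away from $0$ along a set in $\U$, which handles all pairs at positive $M_\U$-distance, and pairs at zero distance give $0$ trivially; hence $T(f_i)_\U = 0$ while $(f_i)_\U \neq 0$. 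The case that $M$ is unbounded is similar, using a sequence with $d(x_n, 0) \to \infty$ and rescaled "bump" functions of height tending to infinity slowly enough to stay in $\ell_\infty$ after the ultrafilter quotient, or can be reduced to it.

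The step I expect to be the main obstacle is the careful verification in $(ii) \Rightarrow (i)$ that the constructed $\bar f$ really is nonzero in $\Lip_0(M)_\U$ while $T\bar f = 0$: one must check that the ultralimit of the local Lipschitz constants does not drop (so $\norm{\bar f} \neq 0$), and simultaneously that $T\bar f(\bx) = \limU f_i(a_i)$ is independent of $\bx$ (equal to $0$) for every admissible $(a_i)_\U$, which requires controlling $f_i(a_i)$ uniformly — the extension of $f_i$ from the three-point set to all of $M$ must be chosen with uniformly bounded range, e.g. via the McShane–Whitney formula, and one must confirm this extension still witnesses Lipschitz constant $1$ at the pair $(x_n, y_n)$. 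The bookkeeping with the CI sequence $(I_n)$ to assign the index-$n$ gadget to exactly the right $i$'s is routine but must be done explicitly, paralleling the proof of Fact~\ref{facts}(a).
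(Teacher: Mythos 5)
Your cycle $(i)\Rightarrow(iii)\Rightarrow(ii)\Rightarrow(i)$ and the content of $(i)\Rightarrow(iii)$ coincide with the paper's proof. The one genuine problem is in the non-uniformly-discrete case of $(ii)\Rightarrow(i)$: you prescribe the partial function on $\{0,x_n,y_n\}$ by $0\mapsto 0$, $x_n\mapsto 0$, $y_n\mapsto d_n:=d(x_n,y_n)$ and then extend. That partial function has Lipschitz constant $\max\{1,\, d_n/d(0,y_n)\}$, which is unbounded in $n$ whenever $y_n$ approaches $0$ faster than $d_n\to 0$ (e.g.\ $d(0,y_n)=d_n^2$). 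In that situation $(f_i)_{i\in I}$ does not lie in $\ell_\infty(\Lip_0(M))$, so $(f_i)_\U$ is not a well-defined element of $\Lip_0(M)_\U$ at all; and even when it is bounded, $\norm{f_i}$ need not equal $1$, so the claim $\norm{\bar f}=1$ needs care. The repair is to avoid prescribing the value at $0$ as part of the extension problem: after arranging $y_n\neq 0$ (swap $x_n$ and $y_n$ if needed), take the globally $1$-Lipschitz tent $g_n(x)=\max\{d_n-d(y_n,x),0\}$, which has values in $[0,d_n]$ and satisfies $g_n(y_n)-g_n(x_n)=d_n$, and set $h_n=g_n-g_n(0)$; the constant shift leaves the Lipschitz constant equal to $1$ and keeps $\norm{h_n}_\infty\leq d_n$. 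This is exactly the paper's construction.

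Two smaller points. First, your ``oscillation collapses'' verification that $T\bar f=0$ is more delicate than necessary and, as phrased, incomplete: once $\norm{h_n}_\infty\leq d_n$ is secured, the fact that $\{i: n_i\geq N\}\supseteq I_N\in\U$ gives $\lim_{\U,i}|f_i(a_i)|\leq\lim_{\U,i}d_{n_i}=0$ for every $(a_i)_\U\in M_\U$, so $T\bar f=0$ with no case distinction on $d(\bx,\by)$. Second, in the unbounded case your sketch is recoverable but misdirected: membership of $(f_i)_{i\in I}$ in $\ell_\infty(\Lip_0(M))$ is governed by the Lipschitz norms, not by the heights of the bumps, and the clean choice is $f_i(x)=d(x,B(0,n))$ for $i\in I_n\setminus I_{n+1}$, which is $1$-Lipschitz with norm $1$ (using unboundedness of $M$) and vanishes along $\U$ on every bounded sequence. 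With these repairs your argument is the paper's proof.
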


\begin{proof}
$(iii)\implies (ii)$ is obvious. 

$(ii)\implies (i)$ Let $(I_n)_n\subset \mathcal U$ be decreasing sequence of sets having empty intersection. Suppose by contradiction that $M$ is unbounded. Given $i\in I_n\setminus I_{n+1}$, consider the function $f_i$  given by $f_i(x)=d(x, B(0,n))$.  It is easy to check that $\norm{f_i}=1$. Let $\bar{f}=(f_i)_\U$. Then it follows $\norm{\bar{f}}=1$ and $T\bar{f}=0$, which is a contradiction. It follows that $M$ is bounded.

Now suppose that $M$ is not uniformly discrete. Then there exist two sequences $(x_n)_n$ and $(y_n)_n$ in $M$ such that $x_n\neq y_n$ for all $n\in\mathbb N$ and $d_n:=d(x_n,y_n)\to 0$. We have that $x_n\neq 0$ or $y_n\neq 0$, so by taking a subsequence if necessary we can suppose without loss of generality that $y_n\neq 0$ for all $n\in\mathbb N$. For all $n\in\mathbb N$, we define a $1$-Lipschitz function $g_n\colon M\to\mathbb [0,d_n]$ by $$g_n(x)=\max\left\{d_n-d(y_n,x),0\right\}$$ 
and let $h_n\in \Lip_0(M)$ given by $h_n(x)=g_n(x)-g_n(0)$.  It is clear that $\norm{h_n}$=1 and $\norm{h_n}_{\infty}\leq d_n$. Now let $i\in I$ and define $f_i=h_n$ where $n$ is such that $i\in I_n\setminus I_{n+1}$. Let $\bar{f}=(f_i)_{\U}\in\Lip_0(M)_\U$. We have that $\norm{\bar{f}}=\lim_{\U} \norm{f_i}=1$, so $\bar{f}\neq 0$. However we have that $T\bar{f}=0$ since $\norm{h_n}_{\infty}\leq d_n$ for $n\in\mathbb N$, which is again a contradiction. So $M$ is uniformly discrete.

$(i)\implies (iii)$ Let $\theta=\inf\{d(x,y): x,y\in M, x\neq y\}>0$. Take $\bf\in \Lip_0(M)_\U$ and $(f_i)_{i\in I}$ with $\bf=(f_i)_\U$. By Theorem \ref{th:ultrLip}, we just need to prove that $\|T\bf\|\geq\norm{\bf}$. Let $\varepsilon>0$. For all $i\in I$, pick $x_i,y_i\in M$ two distinct points of $M$ such that $|f_i(x_i)-f_i(y_i)|\geq (1-\varepsilon)\norm{f_i}d(x_i,y_i)$. Since $M$ is bounded, we may consider $\bx=(x_i)_\U$ and $\by=(y_i)_\U$. Moreover, we have that $d(\bx,\by)=\lim_\U d(x_i,y_i)\geq\theta$, so $\bx\neq\by$. Taking limit on $\U$, it follows that $|T\bf(\bx)-T\bf(\by)|\geq(1-\varepsilon)\norm{\bf}d(\bx,\by)$ and then $\|T\bf\|\geq(1-\varepsilon)\norm{\bf}$. Since this is true for all $\varepsilon>0$, we obtain that $\|T\bf\|\geq\norm{\bf}$.
\end{proof}

Note that the implication $(i)\Rightarrow (iii)$ works for any ultrafilter (not necessarily CI).

\begin{remark} In general the operator $T$ is not onto. In fact, let $M$ be a bounded infinite uniformly discrete set. Suppose also that $\U$ is CI and let $(I_n)_n$ be a decreasing sequence in $\U$ with empty intersection. Let  $f=T((f_i)_\U)$.  Given $i\in I_n\setminus I_{n+1}$, take $x_i,y_i\in M$ two distinct points with $f_i(x_i)-f_i(y_i)\geq (1-1/n)\norm{f_i}d(x_i,y_i)$. Let $\bx=(x_i)_\U$ and $\by=(y_i)_\U$ in $M_\U$ and note that these two elements are distinct since $M$ is uniformly discrete. Then clearly $\bf(\bx)-\bf(\by)=d(\bx,\by)$, that is, 
\[ T((\Lip_0(M))_{\U}) \subset \operatorname{SNA}(M_{\U})\]
where $\operatorname{SNA}(M_{\U})$ denotes the set of Lipschitz functions on $N$ attaining their Lipschitz constant at a pair of points of $M_{\U}$. However, whenever the metric space $M$ is infinite, there are Lipschitz functions on $M_{\U}$ which do not attain the Lipschitz constant (otherwise, every linear functional on $\mathcal F(M_{\U})$ attains its norm, and then $\mathcal F(M_{\U})$ is reflexive). 
\end{remark}

\begin{theorem}\label{th:isometry_lipfree}
Let $\U$ be an ultrafilter on a set $I$ and let $(M_i)_{i\in I}$ be a family of metric spaces. Then $\mathcal F((M_i)_\U)$ is linearly isometric to $\overline{\operatorname{span}(\delta(M_i)_\U)}\subset\mathcal F(M_i)_\U$. 
\end{theorem}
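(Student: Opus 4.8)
The plan is to produce the isometry explicitly via the universal property of Lipschitz-free spaces and to compute norms using Theorem~\ref{th:ultrLip}. Consider the map $g\colon (M_i)_\U\to\mathcal F(M_i)_\U$ given by $g((x_i)_\U)=(\delta(x_i))_\U$. It is well defined and is an isometric embedding with $g(0)=0$: if $(x_i)_{i\in I}\in\ell_\infty(M_i)$ then $\sup_i\norm{\delta(x_i)}_{\mathcal F(M_i)}=\sup_i d(x_i,0_i)<\infty$, and $\norm{(\delta(x_i))_\U-(\delta(y_i))_\U}=\lim_\U d(x_i,y_i)=\dU((x_i)_\U,(y_i)_\U)$. (Equivalently, this is Fact~\ref{facts}(a) applied to the metric subspaces $\delta(M_i)\subset\mathcal F(M_i)$.) By the universal property of $\mathcal F((M_i)_\U)$ there is a unique bounded linear operator $\hat g\colon\mathcal F((M_i)_\U)\to\mathcal F(M_i)_\U$ with $\hat g\circ\delta=g$ and $\norm{\hat g}=\Lip(g)=1$; in particular $\hat g(\delta((x_i)_\U))=(\delta(x_i))_\U$, so $\hat g$ maps $\operatorname{span}\delta((M_i)_\U)$ onto $\operatorname{span}(\delta(M_i)_\U)$.

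The key step is the identity
\[ \langle\bar f,\hat g(\mu)\rangle=\langle T\bar f,\mu\rangle\qquad\text{for all }\mu\in\mathcal F((M_i)_\U),\ \bar f=(f_i)_\U\in\Lip_0(M_i)_\U, \]
where $T$ is the operator of Theorem~\ref{th:ultrLip} and on the left $\bar f$ acts on $\hat g(\mu)\in\mathcal F(M_i)_\U$ through the canonical isometric embedding $\Lip_0(M_i)_\U=(\mathcal F(M_i)^*)_\U\hookrightarrow(\mathcal F(M_i)_\U)^*$. Both sides are linear and continuous in $\mu$, so it suffices to check it for $\mu=\delta((x_i)_\U)$, where both reduce to $\lim_\U f_i(x_i)$. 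Granting this, and using that $\Lip_0(M_i)_\U=(\mathcal F(M_i)^*)_\U$ is $1$-norming for $\mathcal F(M_i)_\U$ (as recalled at the start of this section) together with $T(B_{\Lip_0(M_i)_\U})\subset B_{\Lip_0((M_i)_\U)}$, we obtain for every $\mu\in\mathcal F((M_i)_\U)$
\[ \norm{\hat g(\mu)}=\sup_{\bar f\in B_{\Lip_0(M_i)_\U}}\langle\bar f,\hat g(\mu)\rangle=\sup_{\bar f\in B_{\Lip_0(M_i)_\U}}\langle T\bar f,\mu\rangle=\norm{\mu}, \]
the last equality being precisely the $1$-norming conclusion of Theorem~\ref{th:ultrLip}. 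Hence $\hat g$ is a linear isometry; its range is therefore closed and contains $\operatorname{span}(\delta(M_i)_\U)$, so it equals $\overline{\operatorname{span}(\delta(M_i)_\U)}$, and $\hat g$ is a linear isometry of $\mathcal F((M_i)_\U)$ onto $\overline{\operatorname{span}(\delta(M_i)_\U)}\subset\mathcal F(M_i)_\U$.

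I expect the only genuine obstacle to be the middle step: correctly organizing the duality so that pairing $\bar f$ with $\hat g(\mu)$ inside the ultraproduct $\mathcal F(M_i)_\U$ matches $\langle T\bar f,\mu\rangle$. Once this bookkeeping is in place, the statement falls out of the $1$-norming property already established in Theorem~\ref{th:ultrLip}. Everything else — well-definedness of $g$, the invocation of the universal property, and identifying the closed range — is routine.
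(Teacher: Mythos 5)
Your proposal is correct and follows essentially the same route as the paper: both construct the operator from the isometric embedding $(x_i)_\U\mapsto(\delta(x_i))_\U$ via the universal property and then deduce the lower norm bound from the $1$-norming property of $T(B_{\Lip_0(M_i)_\U})$ established in Theorem~\ref{th:ultrLip}. Your ``key identity'' $\langle\bar f,\hat g(\mu)\rangle=\langle T\bar f,\mu\rangle$ is exactly the relation $S^*|_{\Lip_0(M_i)_\U}=T$ that the paper records in the remark following its proof, and your verification of it on point masses plus density is sound.
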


\begin{proof} 
Let $s\colon (M_i)_\U\to\mathcal F(M_i)_\U$ defined by $s((x_i)_\U)=(\delta_{x_i})_\U$. Note that $s$ is an isometry since 
\begin{align*}
d((x_i)_\U,(y_i)_\U)&=\lim_\U d(x_i,y_i)=\lim_\U\|\delta_{x_i}-\delta_{y_i}\| 
=\|(\delta_{x_i})_\U-(\delta_{x_i})_\U\|\\ &=\|s((x_i)_\U)-s((y_i)_\U)\|
\end{align*}
for all $(x_i)_\U,(x_i)_\U\in (M_i)_\U$. By the linearization property of Lipschitz-free spaces, $s$ extends to a continuous linear operator $S\colon\mathcal F((M_i)_\U)\to\mathcal F(M_i)_\U$ such that $\|S\|=1$. 
Let $\varepsilon>0$ and fix $\mu=\sum_{j=1}^na_j\delta_{(x^j_i)_\U}\in \mathcal F((M_i)_\U)$. Let $T\colon\Lip_0(M_i)_\U\to\Lip_0((M_i)_\U)$ be the operator defined in Theorem~\ref{th:ultrLip}. Since $T(B_{\Lip_0(M_i)_\U})$ is $1$-norming, there exists $(f_i)_\mathcal U\in\Lip_0(M_i)_\U$ such that $\|\mu\|=\langle T((f_i)_\mathcal U),\mu\rangle$ and $\|(f_i)_\mathcal U\|\leq 1+\varepsilon$. It follows that
\begin{align*}
\|\mu\|&=\langle T((f_i)_\mathcal U),\mu\rangle
=\sum_{j=1}^na_j\langle T((f_i)_\mathcal U),\delta_{(x^j_i)_\U}\rangle
=\sum_{j=1}^na_j\lim_{\mathcal U,i} f_i(x_i^j)\\
&=\sum_{j=1}^na_j\langle (f_i)_\mathcal U, (\delta_{x^j_i})_\U\rangle
=\langle(f_i)_\mathcal U,S(\mu)\rangle \leq(1+\varepsilon)\|S(\mu)\|,
\end{align*}
and we deduce that $\|\mu\|\leq\|S(\mu)\|$ since $\varepsilon$ was arbitrary. By density of the measures with finite support, it follows that $S$ is an isometry.

\end{proof}

\begin{remark}
The previous proof gives that $\langle T((f_i)_\mathcal U),\mu\rangle=\langle(f_i)_\mathcal U,S(\mu)\rangle$ for all $\mu\in \mathcal F((M_i)_\U)$ and all $(f_i)_\mathcal U\in\Lip_0(M_i)_\U$. In other words,  $S^*|_{\Lip_0(M_i)_\U}=T$.
\end{remark}

\bigskip

\section{Finite representability of metric and Banach spaces}\label{sec:finrep}

Given $\lambda\geq 1$, a Banach space $X$ is \textit{$\lambda$-finitely representable} in a Banach space $Y$ if for any finite-dimensional subspace $E$ of $X$ and every $\varepsilon>0$, there exists a finite-dimensional subspace $F$ of $Y$ such that $d(E.F)\leq\lambda+\varepsilon$, where $d(E,F)$ is the Banach-Mazur distance between $E$ and $F$. If there exists $\lambda\geq 1$ such that $X$ is $\lambda$-finitely representable $Y$, we say that $X$ is \textit{crudedly finitely representable} in $Y$. Moreover, if $X$ is $1$-finitely representable in $Y$, we say that $X$ is \textit{finitely representable} in $Y$ (see e.g. \cite{AK} for these notions). It is well known  that $X$ is finitely representable in $Y$ if and only if $X$ is isometric to a subspace of an ultrapower of $Y$. Thus, we immediately obtain the following consequence of Theorem~\ref{th:isometry_lipfree}.

\begin{theorem}\label{th:norming}
Let $M$ be a metric space and let $\mathcal U$ be an ultrafilter. Then $\mathcal F(M_\U)$ is finitely representable in $\mathcal F(M)$.
\end{theorem}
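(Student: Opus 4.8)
The plan is to derive Theorem~\ref{th:norming} as an immediate corollary of Theorem~\ref{th:isometry_lipfree} together with the classical characterization of finite representability via ultrapowers. First I would specialize Theorem~\ref{th:isometry_lipfree} to the case of an ultrapower, that is, take $M_i = M$ for all $i \in I$, so that $(M_i)_\U = M_\U$. The theorem then yields that $\mathcal F(M_\U)$ is linearly isometric to a subspace of the ultrapower $\mathcal F(M)_\U$ of the single Banach space $\mathcal F(M)$.

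Next I would invoke the standard fact recalled just above the statement: a Banach space $X$ is finitely representable in a Banach space $Y$ if and only if $X$ is linearly isometric to a subspace of some ultrapower of $Y$. Applying this with $X = \mathcal F(M_\U)$ and $Y = \mathcal F(M)$, the isometric embedding $\mathcal F(M_\U) \hookrightarrow \mathcal F(M)_\U$ produced in the previous paragraph is exactly what is needed to conclude that $\mathcal F(M_\U)$ is finitely representable in $\mathcal F(M)$. That is the whole argument; there is essentially nothing to prove beyond assembling these two ingredients.

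There is no real obstacle here, since all the work has been done in Theorem~\ref{th:isometry_lipfree}. The only point requiring a moment's care is that the classical ``finitely representable iff subspace of an ultrapower'' equivalence quantifies over \emph{all} ultrapowers, whereas we have exhibited an embedding into one specific ultrapower; but the nontrivial direction of that equivalence (existence of an embedding into some ultrapower implies finite representability) goes through for any single ultrapower, so this causes no difficulty. One could also phrase it slightly more directly by recalling that a subspace of an ultrapower of $Y$ is automatically finitely representable in $Y$, which is the form in which we use it.

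\begin{proof}
Apply Theorem~\ref{th:isometry_lipfree} with $M_i = M$ for every $i \in I$, so that $(M_i)_\U = M_\U$. We obtain that $\mathcal F(M_\U)$ is linearly isometric to a subspace of the ultrapower $\mathcal F(M)_\U$. Since any Banach space that embeds isometrically into an ultrapower of a Banach space $Y$ is finitely representable in $Y$, it follows that $\mathcal F(M_\U)$ is finitely representable in $\mathcal F(M)$.
\end{proof}
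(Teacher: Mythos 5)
Your proposal is correct and is exactly the paper's argument: the authors state Theorem~\ref{th:norming} as an immediate consequence of Theorem~\ref{th:isometry_lipfree} combined with the classical fact that isometric embeddability into an ultrapower of $Y$ implies finite representability in $Y$. Nothing further is needed.
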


We will deal with a related notion for metric spaces introduced by Lee, Naor and Peres in \cite{LNP}. We take the terminology from \cite{Baudier}. For a biLipschitz embedding $\phi$, $\dist(\phi)=\Lip(f)\Lip(f^{-1})$ denotes its \emph{distortion}.
\begin{definition}
Let $\lambda\geq 1$ and $M, N$ be metric spaces. We say that $M$ is \emph{finitely $\lambda$-Lipschitz representable} into $N$ if for every finite subset $F$ in $M$ and every $\varepsilon>0$ there is a map
$\phi\colon F \to N$ such that $\dist(\phi)\leq \lambda+\varepsilon$.
\end{definition}

Moreover, we will consider the following notions. 

\begin{definition} Let $M$ and $N$ be metric spaces. If $M$ is finitely $\lambda$-Lipschitz representable in $N$ for some $\lambda\geq 1$, we say that $M$ is \emph{crudely finitely Lipschitz representable} in $N$. 
If $M$ is finitely $1$-Lipschitz representable in $N$, we say that $M$ is \emph{finitely representable} in $N$.
\end{definition}

In the case of Banach spaces, this notion coincides with the usual finite representability. Indeed, the following is a consequence of Theorem 13 in \cite{Tony}.

\begin{proposition}
Let $X$ and $Y$ be two Banach spaces and let $\lambda\geq 1$. Then $X$ is finitely $\lambda$-Lipschitz representable in $Y$ if and only if $X$ is $\lambda$-finitely representable in $Y$.
\end{proposition}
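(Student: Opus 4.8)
The plan is to prove the equivalence by exploiting the fact that both notions admit an ultraproduct characterization, so that the statement reduces to comparing two ways of embedding $X$ into an ultrapower of $Y$. For one direction, assume $X$ is $\lambda$-finitely representable in $Y$ in the Banach-space sense. Then, by the classical ultraproduct theorem, $X$ is linearly $\lambda$-isomorphic to a subspace of some ultrapower $Y_\U$; a linear $\lambda$-isomorphism is in particular a biLipschitz embedding of distortion at most $\lambda$, so for every finite $F\subset X$ the restriction of this embedding witnesses finite $\lambda$-Lipschitz representability (one may need to compose with a rescaling to land in $Y$ rather than $Y_\U$, but on a finite set this is routine via a local perturbation argument, or one invokes Theorem~\ref{th:finrepBan} to pass back and forth; in any case, a linear $(\lambda+\varepsilon)$-embedding of $\mathrm{span}(F)$ into $Y$ restricts to the desired $\phi$). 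Thus $\lambda$-finite representability implies finite $\lambda$-Lipschitz representability, and this direction is essentially immediate.

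For the converse, suppose $X$ is finitely $\lambda$-Lipschitz representable in $Y$. The key input is the cited Theorem~13 of \cite{Tony}: a Banach space $X$ biLipschitz embeds into a Banach space $Z$ with distortion $\le D$ if and only if $X$ linearly embeds into $Z$ with distortion $\le D$ — that is, for Banach-space targets, biLipschitz and linear distortion coincide. First I would promote the hypothesis from finite subsets to the whole space: finite $\lambda$-Lipschitz representability of $X$ into $Y$ is equivalent (by Theorem~\ref{th:finrepBan}, which I am allowed to assume, or by a direct ultraproduct construction) to the existence of a biLipschitz embedding of $X$ into an ultrapower $Y_\U$ with distortion $\le \lambda$ (the $+\varepsilon$ is absorbed in the ultralimit). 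Since $Y_\U$ is a Banach space (Fact~\ref{facts}(c)), Theorem~13 of \cite{Tony} upgrades this to a \emph{linear} embedding of $X$ into $Y_\U$ with distortion $\le\lambda$. Finally, a linear $\lambda$-embedding of $X$ into the ultrapower $Y_\U$ is exactly the statement that $X$ is $\lambda$-finitely representable in $Y$, by the classical ultraproduct characterization of finite representability (again up to an arbitrarily small loss in the constant, handled by a standard $\varepsilon$-net / perturbation argument on each finite-dimensional subspace).

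The main obstacle — and the reason the proof is not completely trivial — is bookkeeping of constants and making precise the passage between "finite" statements (about finite subsets $F$, with a loss $+\varepsilon$) and "global" statements (a single biLipschitz or linear map on all of $X$ into an ultrapower). One must be careful that the distortion of the ultraproduct map is controlled by $\sup$ of the finite distortions, that applying Theorem~13 of \cite{Tony} to the ultrapower does not silently require separability or reflexivity hypotheses that $Y_\U$ might fail, and that converting a linear embedding into $Y_\U$ back into $\lambda$-finite representability does not introduce a multiplicative rather than additive error. I expect all of these to be manageable: the ultraproduct of Banach spaces is a genuine Banach space with no extra hypotheses needed, Theorem~13 of \cite{Tony} is stated for arbitrary Banach spaces, and the finite-dimensional perturbation arguments are entirely standard. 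So the proof is short: it is the composition of Theorem~\ref{th:finrepBan}, Theorem~13 of \cite{Tony}, and the classical ultraproduct characterization of finite representability, with the only real content being the careful tracking of the constant $\lambda$ through each step.
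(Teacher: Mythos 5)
The easy direction of your argument is fine (restricting a linear $(\lambda+\varepsilon)$-isomorphism on $\operatorname{span}(F)$ to $F$ is all that is needed; the detour through $Y_\U$ there is harmless but superfluous). The problem is in the converse, where you paraphrase Theorem~13 of \cite{Tony} as: ``a Banach space $X$ biLipschitz embeds into a Banach space $Z$ with distortion $\le D$ if and only if $X$ linearly embeds into $Z$ with distortion $\le D$.'' That statement is false. By Aharoni's theorem, every separable Banach space (e.g.\ $\ell_2$) biLipschitz embeds into $c_0$ with distortion $2$, yet $\ell_2$ does not linearly embed into $c_0$ at all, since every infinite-dimensional subspace of $c_0$ contains a copy of $c_0$. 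So the step in which you ``upgrade'' the global biLipschitz embedding $X\to Y_\U$ to a global \emph{linear} embedding of $X$ into $Y_\U$ is not available, and as written the proof has a genuine gap. What is true (Ribe, Heinrich--Mankiewicz, and what Procházka's Theorem~13 actually provides) is the \emph{local} linearization: a biLipschitz embedding of a Banach space into a Banach space with distortion $\le D$ forces every finite-dimensional subspace of the domain to embed linearly with distortion $\le D+\varepsilon$, i.e.\ $D$-finite representability --- note that $\ell_2$ is indeed finitely representable in $c_0$, consistent with this. That local statement is all your argument needs: it gives that $X$ is $\lambda$-finitely representable in $Y_\U$, hence in $Y$ since every finite-dimensional subspace of an ultrapower is $(1+\varepsilon)$-isomorphic to a subspace of $Y$.

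Once you quote the correct (local) form of Theorem~13, your proof closes, but at that point the entire passage through $Y_\U$ becomes unnecessary: Theorem~13 of \cite{Tony} already asserts, in substance, that finite $\lambda$-Lipschitz representability implies $\lambda$-finite representability, which is exactly how the paper disposes of the hard direction (the paper gives no argument beyond this citation). Two further bookkeeping remarks: Theorem~\ref{th:finrepBan} appears \emph{after} this proposition in the paper, so invoking it here requires checking (as is the case) that its proof via Proposition~\ref{prop:LNPulta} does not depend on the present statement; and your final step ``a linear $\lambda$-embedding of $X$ into $Y_\U$ is exactly $\lambda$-finite representability in $Y$'' should be phrased as an application of the standard fact above about finite-dimensional subspaces of ultrapowers, not of the ultrapower characterization in the reverse direction.
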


Our first goal is to show that the finite Lipschitz representability admits a characterization in terms of ultraproducts which is analogous to the corresponding result for Banach spaces (see \cite{Heinrich}). 

\begin{proposition}\label{prop:LNPulta} Let $M, N$ be metric spaces. The following assertions are equivalent:
\begin{enumerate}[(i)]
\item $M$ is finitely $\lambda$-Lipschitz representable into $N$;
\item there exist an ultrafilter  $\mathcal U$ on a set $I$, scaling factors $r_i>0$, points $0_i\in N$ and a $\lambda$-biLipschitz embedding of $M$ into $(N, 0_i, r_i d)_{\U}$.
\end{enumerate}
In that case, if moreover $M$ is separable, then for any CI ultrafilter $\U$ there are $r_i>0$, points $0_i\in N$, and a $\lambda$-biLipschitz embedding of $M$ into $(N, 0_i, r_i d)_{\U}$.
\end{proposition}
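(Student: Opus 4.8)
The plan is to prove the two directions of the equivalence and then handle the separable refinement.

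For $(ii)\implies(i)$: Suppose $\phi\colon M\to (N,0_i,r_id)_\U$ is a $\lambda$-biLipschitz embedding; after rescaling $\phi$ we may assume $\Lip(\phi)\le\lambda+\varepsilon_0$ for a preliminary $\varepsilon_0$ and $\Lip(\phi^{-1})\le 1$, or more cleanly $d(x,y)\le\dU(\phi(x),\phi(y))\le\lambda d(x,y)$. Given a finite set $F=\{x^1,\dots,x^n\}\subset M$ and $\varepsilon>0$, write $\phi(x^j)=(y_i^j)_\U$ with $y_i^j\in N$. For each pair $j\ne k$, the set of $i$ for which $r_i d(y_i^j,y_i^k)$ is within a factor $1+\varepsilon$ of $\dU(\phi(x^j),\phi(x^k))$ lies in $\U$; intersecting finitely many such sets (and one ensuring $r_i>0$) gives a nonempty set, so we may pick a single index $i_0$ for which all the estimates hold simultaneously. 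Then $\psi\colon F\to N$ given by $\psi(x^j)=y_{i_0}^j$, viewed in the metric $r_{i_0}d$, has distortion $\le\lambda(1+\varepsilon)^2$ (and scaling does not change distortion). Adjusting $\varepsilon$ gives the claim.

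For $(i)\implies(ii)$: Let $I$ be the set of all pairs $(F,\varepsilon)$ with $F\subset M$ finite and $\varepsilon>0$, directed by $(F,\varepsilon)\preceq(F',\varepsilon')$ iff $F\subseteq F'$ and $\varepsilon'\le\varepsilon$, and let $\U$ be an ultrafilter on $I$ containing all the ``up-sets'' $\{j\in I: j\succeq i\}$. For each $i=(F,\varepsilon)$, finite $\lambda$-Lipschitz representability gives $\phi_i\colon F\to N$ with $\dist(\phi_i)\le\lambda+\varepsilon$; rescaling $\phi_i$ by a suitable $r_i>0$ (namely so that its lower Lipschitz bound is $1$) we may assume $d(x,y)\le r_i d(\phi_i(x),\phi_i(y))\le(\lambda+\varepsilon)d(x,y)$ for $x,y\in F$. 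Fix a basepoint $x_0\in M$ (say the distinguished point) and set $0_i=\phi_i(x_0)$ when $x_0\in F$, arbitrary otherwise. Now define $\Phi\colon M\to(N,0_i,r_id)_\U$ by $\Phi(x)=(z_i)_\U$ where $z_i=\phi_i(x)$ if $x\in F_i$ and $z_i=0_i$ otherwise; the coordinates are bounded in the $r_id$-metric because for $i\succeq(\{x_0,x\},1)$ we have $r_id(\phi_i(x),0_i)\le(\lambda+1)d(x,x_0)$, so for all but finitely many (hence $\U$-many) $i$ the value is controlled. For $x,y\in M$, any $i\succeq(\{x_0,x,y\},\varepsilon)$ satisfies $d(x,y)\le r_id(z_i,z_i')\le(\lambda+\varepsilon)d(x,y)$, so taking the $\U$-limit gives $d(x,y)\le\dU(\Phi(x),\Phi(y))\le\lambda d(x,y)$. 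Hence $\Phi$ is a $\lambda$-biLipschitz embedding.

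For the separable refinement: fix a countable dense set $\{x_n\}_{n\ge1}$ in $M$ and let $\U$ be an arbitrary CI ultrafilter on a set $I$, with $(I_n)_n$ a decreasing sequence in $\U$ with empty intersection. For $i\in I_n\setminus I_{n+1}$, apply finite $\lambda$-Lipschitz representability to the finite set $\{x_1,\dots,x_n\}$ with $\varepsilon=1/n$, obtaining (after rescaling by $r_i>0$) a map $\phi_i$ with $d(x,y)\le r_id(\phi_i(x),\phi_i(y))\le(\lambda+1/n)d(x,y)$ on $\{x_1,\dots,x_n\}$; set $0_i=\phi_i(x_1)$ and for $i\notin I_1$ choose everything arbitrarily. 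Define $\Phi$ on the dense set by $\Phi(x_m)=(\phi_i(x_m))_\U$ (using $0_i$ in coordinates where $\phi_i(x_m)$ is undefined, i.e. $i\in I_n\setminus I_{n+1}$ with $n<m$); since $I_m\in\U$ and $I_m\subseteq\{i: m\le n\}$ in the relevant sense, the $\U$-limit of the distance estimates gives $d(x_m,x_{m'})\le\dU(\Phi(x_m),\Phi(x_{m'}))\le\lambda d(x_m,x_{m'})$. Thus $\Phi$ is $\lambda$-biLipschitz on a dense subset; since $(N,0_i,r_id)_\U$ is complete by Fact~\ref{facts}(b), $\Phi$ extends to a $\lambda$-biLipschitz embedding of all of $M$. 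I expect the main obstacle to be purely bookkeeping: verifying that the chosen coordinate sequences are genuinely bounded in the ultrapower's $\ell_\infty$-sense and that filling in undefined coordinates with $0_i$ does not spoil the two-sided estimate in the $\U$-limit — this is where one must be careful that the ``tail'' sets where the estimate holds actually belong to $\U$.
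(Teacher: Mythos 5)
Your argument follows the paper's proof essentially step for step: the same directed index set of pairs $(F,\varepsilon)$ with the up-set ultrafilter, the same rescaling $r_i=\Lip(\phi_i^{-1})$, the same single-index selection for the converse, and the same dense-subset-plus-completeness argument for the separable case. The one point where your write-up is not quite right is the $\ell_\infty$-boundedness check in $(i)\Rightarrow(ii)$: controlling $r_i d(\phi_i(x),0_i)$ only for $i\succeq(\{x_0,x\},1)$ does not bound the coordinates over \emph{all} of $I$ (and the set of remaining indices is not finite), since for $i$ with $x\in F_i$ but $x_0\notin F_i$ or $\varepsilon_i$ large the value is uncontrolled. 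The fix is what the paper builds into the index set — require $x_0\in F$ and $\varepsilon<1$ for every $i\in I$ (or, equivalently, set $z_i=0_i$ off the up-set $\{i:i\succeq(\{x_0,x\},1)\}$) — after which everything you wrote goes through.
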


\begin{proof} 
Suppose that $(i)$ holds. Fix a point $0\in M$ and define $$I:=\{(A,\varepsilon)\ :\ 0\in A\subset M,\ |A|<\infty,\ 0<\varepsilon<1\}$$ with the partial order defined by $(A_1,\varepsilon_1)\preceq(A_2,\varepsilon_2)$ if and only if $A_1\subset A_2$ and $\varepsilon_1\geq\varepsilon_2$. Since any pair of element of $I$ has an infimum, it is easy to show that $$\beta:=\{\{i\in I:\ i_0\preceq i\}\ |\ i_0\in I\}$$ is a filter basis. Then let $\U$ be any ultrafilter  containing the filter generated by $\beta$. For all $i=(A_i,\varepsilon_i)\in I$, there exists a one-to-one function $\phi_i\colon A_i\to N$ such that $\dist(\phi_i)\leq\lambda+\varepsilon_i$. Consider the metric space $(N_i,0_i,d_i)$ where $N_i=N$, $d_i= \norm{\phi_i^{-1}}d$ and $0_i=\phi_i(0)$. Given $x\in M$, let $y_i=\phi_i(x)$ if $x\in A_i$ with $i=(A_i,\varepsilon_i)$ and $y_i=0_i$ if not. Note that
\[ d_i(\phi_i(x),0_i)\leq \norm{\phi_i}d_i(x,0)\leq (\lambda+1)d(x,0)\]
and so $(\phi_i(x))_{i\in I}$ gives an element of $(N_i)_{\mathcal U}$. This means that $x\mapsto (y_i)_{\U}$ defines a map $\phi\colon M\to (N_i)_\U$. 

Now, let $\varepsilon_0>0$ arbitrary and take $x,x'\in M$. Note that $I_0:=\{(A,\varepsilon)\in I\ |\ x,x'\in A,\ \varepsilon\leq\varepsilon_0\}$ belongs to $\U$. For $i\in I_0$, we have that \[d(x,x')\leq \norm{\phi_i^{-1}} d(y_i,y_i')\leq \norm{\phi_i^{-1}}\norm{\phi_i} d(x,x')\leq (\lambda+\varepsilon_0)d(x,x').\] Letting $r_i=\norm{\phi_i^{-1}}$ and taking limit on $\U$, we obtain that \[d(x,y)\leq d(\phi(x),\phi(y))\leq (\lambda+\varepsilon_0) d(x,y).\] Since $\varepsilon_0$ was arbitrary, we conclude that $\phi$ is a $\lambda$-biLipschitz embedding.

For the other implication, suppose that there exists $\phi\colon M\to (N, 0_i, r_i d)_{\mathcal U}$ with $\dist(\phi)\leq \lambda$ for some ultrafilter $\U$ on a set $I$ and numbers $r_i>0$. Let $A=\{x^1,\ldots,x^p\}$ be a finite subset of different elements of $M$ and fix $\varepsilon>0$. Each $\phi(x^k)$ can be written $\phi(x^k)=(y^k_i)_\mathcal U$. For $i\in I$, define a function $\phi_i\colon A\to N$ by $\phi_i(x^k)=y^k_i$. Note that for $k,l\in\{1,\ldots,p\}$, we have that
\[ \norm{\phi^{-1}}^{-1}d(x^k,x^l)\leq d(\phi(x^k),\phi(x^l))\leq \norm{\phi}d(x^k,x^l)\]
and
\[ d(\phi(x^k),\phi(x^l))=\lim_{i,\U} r_i d(\phi_i(x^k),\phi_i(x^l)).\] 
It follows that 
\[\left\{i\in I\ |\ (1-\varepsilon)\norm{\phi^{-1}}^{-1}d(x^k,x^l)\leq r_id(\phi_i(x^k),\phi_i(x^l))\leq (1+\varepsilon)\norm{\phi}d(x^k,x^l)\ \forall k,l\right\}\]
belongs to $\mathcal U$ and so it is not empty. Taking $i$ in this set we have that \[(1-\varepsilon)r_i^{-1}\norm{\phi^{-1}}d(a,b)\leq d(\phi_i(a),\phi_i(b))\leq (1+\varepsilon)r_i^{-1}\norm{\phi}d(a,b)\] for all $a,b\in A$. That is, \[\dist(\phi_i)\leq \frac{1+\varepsilon}{1-\varepsilon}\dist(\phi)\leq \frac{1+\varepsilon}{1-\varepsilon}\lambda\]
and so $(i)$ holds.

Now suppose that $M$ is separable and that $(i)$ holds. Let $\U$ be any CI ultrafiltrer over a set $I$ and let $(I_n)_n\subset\U$ be a decreasing sequence with empty intersection. Let $\{x_n\}_n$ be a countable dense subset of $M$. For all $n\in\mathbb N$, there exists a function $\phi_n\colon \{x_k\}_{1\leq k\leq n}\to N$ such that $\dist(\phi_n)\leq (1+1/n)\lambda$. Given $i\in \bigcup_n I_n$, let $n_i$ be such that $i\in I_{n_i}\setminus I_{n_i+1}$, and consider the metric space $(N_i, 0_i, d_i)$ where $N_i=N$, $d_i=\norm{\phi_{n_i}^{-1}}d$ and $0_i=\phi_{n_i}(x_1)$. If $i\in I\setminus I_1$, define $r_i>0$ arbitrarily. Note that, given $m\in \mathbb N$, 
\[d_i(\phi_{n_i}(x_m),0_i)\leq \dist(\phi_{n_i})d(x_m,x_1)\leq 2\lambda d(x_m,x_1),\]
and so we may consider the element $(\phi_{n_i}(x_m))_{\U,i}$.

Now, define a function $\phi\colon\{x_n\}_n\to (N_i)_{\U}$ by $\phi(x_m)=(\phi_{n_i}(x_m))_{\mathcal U,i}$. We will prove that $\phi$ is an isometry and then will extend to a unique isometry defined on $M$. Let $\varepsilon>0$ and $p_0\in\mathbb N$ such that $\frac{1}{p_0}<\varepsilon$. Let $p<q$ and define $\tilde{q}=\max\{p_0,q\}$. Let $I_0=\bigcup_{n\geq\tilde{q}}I_n\in\U$ and take $i\in I_0$. It is clear that $y^p_i=\phi_{n_i}(x_p)$ and $y^q_i=\phi_{n_i}(x_q)$. It follows that 
\begin{align*} d(x_p, x_q)&\leq \norm{\phi_{n_i}^{-1}}d(y_i^p,y_i^q)\leq \norm{\phi_{n_i}}\norm{\phi_{n_i}^{-1}}d(x_p,x_q)\\
&\leq (1+1/n_i)\lambda d(x_p,x_q)<(1+\varepsilon)\lambda d(x_p,x_q)\end{align*}

Taking limit on $\U$, we deduce that $$d(x_p,x_q)\leq d(\phi(x_p),\phi(x_q))\leq (1+\varepsilon)\lambda d(x_p,x_q).$$ Since $\varepsilon$ was arbitrary, we conclude that $\phi$ is an isometry and the proof is complete.
\end{proof}

Note that in the case $N=X$ is a Banach space clearly one may assume that $\phi_i(0)=0$ and $\norm{\phi_i^{-1}}=1$ (and then $r_i=1$) in the proof of $(i)\Rightarrow(ii)$ above, so we get: 

\begin{theorem}\label{th:finrepBan}
Let $M$ be a metric space and $X$ be a Banach space. The following assertions are equivalent:
\begin{enumerate}
    \item[(i)] $M$ is finitely $\lambda$-Lipschitz representable in $X$;
    \item[(ii)] there exists an ultrafilter $\mathcal U$ such that $M$ is $\lambda$-biLipschitz equivalent to a subset of $X_\U$.
\end{enumerate}
In that case, if moreover $M$ is separable and $\mathcal U$ is a CI ultrafilter, then $M$ is $\lambda$-biLipschitz equivalent to a subset of $X_\U$.
\end{theorem}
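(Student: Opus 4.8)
The plan is to deduce this statement from Proposition~\ref{prop:LNPulta}, using only the elementary fact that a Banach space, viewed as a pointed metric space, is unchanged up to isometry under a translation of the base point and a positive rescaling of the norm. For the implication $(ii)\Rightarrow(i)$, I will apply the implication $(ii)\Rightarrow(i)$ of Proposition~\ref{prop:LNPulta} with $N=X$, all $r_i=1$ and all $0_i=0$; then $(X,0_i,r_i d)_\U=X_\U$, and the hypothesis of $(ii)$ is exactly that $M$ embeds $\lambda$-biLipschitzly into this ultrapower, so the proposition yields that $M$ is finitely $\lambda$-Lipschitz representable in $X$.

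For $(i)\Rightarrow(ii)$, I will invoke Proposition~\ref{prop:LNPulta} to obtain an ultrafilter $\U$ on a set $I$, scalars $r_i>0$, points $0_i\in X$ and a $\lambda$-biLipschitz embedding $\phi\colon M\to (X,0_i,r_i d)_\U$. The key point is that for each $i\in I$ the affine bijection $V_i\colon X\to X$, $V_i(y)=r_i(y-0_i)$, satisfies $\|V_i(y)-V_i(z)\|=r_i\,d(y,z)$ and sends $0_i$ to $0$; thus it is a pointed isometry from $(X,0_i,r_i d)$ onto $(X,0,d)$, and it respects the $\ell_\infty$-boundedness conditions defining the two ultraproducts, so the family $(V_i)_{i\in I}$ induces a (bijective) isometry $V\colon (X,0_i,r_i d)_\U\to X_\U$. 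Then $V\circ\phi\colon M\to X_\U$ is a $\lambda$-biLipschitz embedding, which is $(ii)$. Equivalently, one can re-run the proof of $(i)\Rightarrow(ii)$ in Proposition~\ref{prop:LNPulta} after replacing each one-to-one map $\phi_i\colon A_i\to X$ by $\psi_i:=\|\phi_i^{-1}\|(\phi_i-\phi_i(0))$, which has the same distortion, satisfies $\psi_i(0)=0$ and $\|\psi_i^{-1}\|=1$, so that the metric spaces $(N_i,0_i,d_i)$ appearing there all coincide with $X$ and their ultraproduct is $X_\U$. For the final assertion, assuming $M$ separable and $\U$ a CI ultrafilter, I will instead use the last statement of Proposition~\ref{prop:LNPulta} to produce $r_i>0$, $0_i\in X$ and a $\lambda$-biLipschitz embedding $M\to (X,0_i,r_i d)_\U$ for this particular $\U$, and then compose with the isometry $V$ above to land inside $X_\U$.

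The proof is essentially bookkeeping on top of Proposition~\ref{prop:LNPulta}: the only point that requires a (routine) check is that the maps $V_i$ genuinely induce a well-defined isometry between the two ultraproducts, i.e.\ that $(x_i)_{i\in I}$ lies in $\ell_\infty((X,0_i,r_i d))$ if and only if $(V_i(x_i))_{i\in I}$ lies in $\ell_\infty(X)$, and that $\dU$ is transported correctly; both are immediate from $\|V_i(x_i)\|=r_i\,d(x_i,0_i)$. So I do not expect any genuine obstacle.
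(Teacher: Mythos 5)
Your proposal is correct and follows essentially the same route as the paper, which obtains Theorem~\ref{th:finrepBan} from Proposition~\ref{prop:LNPulta} by observing that in a Banach space one may normalize each $\phi_i$ so that $\phi_i(0)=0$ and $\|\phi_i^{-1}\|=1$ (your maps $\psi_i$), whence $r_i=1$, $0_i=0$ and the ultraproduct is just $X_\U$. Your alternative formulation via the induced isometry $V\colon (X,0_i,r_i d)_\U\to X_\U$ is the same normalization performed after passing to the ultraproduct, and the routine checks you flag are indeed immediate.
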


\begin{theorem}\label{fr_metricspace}
Let $M$ be a metric space and $X$ be a Banach space. Assume  that $M$ is finitely $\lambda$-Lipschitz representable in $X$. Then $\mathcal F(M)$ is $\lambda$-finitely representable in $\mathcal F(X)$. 
\end{theorem}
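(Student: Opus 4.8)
The plan is to chain together the two main theorems already established: Theorem~\ref{th:finrepBan}, which embeds $M$ biLipschitzly into an ultrapower of $X$, and Theorem~\ref{th:norming}, which controls the free space of an ultrapower. Concretely, since $M$ is finitely $\lambda$-Lipschitz representable in $X$, Theorem~\ref{th:finrepBan} provides an ultrafilter $\mathcal U$ and a map $\phi\colon M\to X_{\mathcal U}$ with $\dist(\phi)\le\lambda$; here $X_{\mathcal U}$ is the metric ultrapower of $X$, which is the Banach ultrapower by Fact~\ref{facts}(c). Inspecting the construction (cf.\ the remark after Proposition~\ref{prop:LNPulta}) we may take $\phi$ base-point preserving, so $0\in\phi(M)\subseteq X_{\mathcal U}$, and we normalize so that $\Lip(\phi)\le\lambda$ and $\Lip(\phi^{-1})\le 1$.

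Next I would lift $\phi$ to the level of Lipschitz-free spaces. By the universal property, the Lipschitz map $\delta_{X_{\mathcal U}}\circ\phi\colon M\to\mathcal F(X_{\mathcal U})$ linearizes to an operator $\widehat\phi\colon\mathcal F(M)\to\mathcal F(X_{\mathcal U})$ with $\|\widehat\phi\|=\Lip(\phi)\le\lambda$ and range $\overline{\operatorname{span}}\,\delta(\phi(M))$; likewise $\delta_M\circ\phi^{-1}$ linearizes to an operator of norm $\Lip(\phi^{-1})\le1$ which agrees with the inverse of $\widehat\phi$ on the generators $\delta(\phi(x))$ and is therefore $\widehat\phi^{-1}$ onto its range. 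Hence $\mathcal F(M)$ is linearly isomorphic, with Banach--Mazur distance at most $\dist(\phi)\le\lambda$, to $\mathcal F(\phi(M))=\overline{\operatorname{span}}\,\delta(\phi(M))$. Since $\phi(M)$ is a pointed metric subset of $X_{\mathcal U}$, the standard fact that $\mathcal F$ of a subset embeds isometrically gives that $\mathcal F(\phi(M))$ is a subspace of $\mathcal F(X_{\mathcal U})$, so $\mathcal F(M)$ is $\lambda$-isomorphic to a subspace of $\mathcal F(X_{\mathcal U})$.

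Finally I would apply Theorem~\ref{th:norming} to the metric space $X$: $\mathcal F(X_{\mathcal U})$ is finitely representable in $\mathcal F(X)$. To conclude, fix a finite-dimensional $E\subseteq\mathcal F(M)$ and $\varepsilon>0$; then $\widehat\phi(E)\subseteq\mathcal F(X_{\mathcal U})$ satisfies $d(E,\widehat\phi(E))\le\lambda$, and by finite representability there is a finite-dimensional $F\subseteq\mathcal F(X)$ with $d(\widehat\phi(E),F)\le 1+\varepsilon'$; multiplicativity of the Banach--Mazur distance yields $d(E,F)\le\lambda(1+\varepsilon')\le\lambda+\varepsilon$ for $\varepsilon'$ small enough. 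This is exactly the assertion that $\mathcal F(M)$ is $\lambda$-finitely representable in $\mathcal F(X)$.

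I do not expect a serious obstacle: the proof is a short assembly whose two nontrivial inputs (Theorems~\ref{th:finrepBan} and~\ref{th:norming}) are already in hand. The only points needing care are bookkeeping ones — that the metric and Banach ultrapowers of $X$ coincide, that a biLipschitz embedding linearizes to an isomorphism with distortion bounded by the product of the one-sided Lipschitz constants, and that passing to an isomorphic subspace preserves finite representability — and none of these is deep.
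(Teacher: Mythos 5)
Your proposal is correct and follows essentially the same route as the paper: embed $M$ into $X_{\mathcal U}$ via Theorem~\ref{th:finrepBan}, linearize to get $\mathcal F(M)$ $\lambda$-isomorphic to a subspace of $\mathcal F(X_{\mathcal U})$, and then pass back to $\mathcal F(X)$. The only cosmetic difference is that you invoke Theorem~\ref{th:norming} together with multiplicativity of the Banach--Mazur distance, whereas the paper cites Theorem~\ref{th:isometry_lipfree} (of which Theorem~\ref{th:norming} is the immediate corollary) to land in the ultrapower $\mathcal F(X)_{\mathcal U}$ and conclude; the content is the same.
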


\begin{proof}
Assume $M$ is finitely $\lambda$-Lipschitz representable in $X$. By Theorem~\ref{th:finrepBan}, there exists an ultrafilter $\U$ such that $M$ $\lambda$-biLipschitz embeds in  $X_\U$. It follows that $\mathcal F(M)$ is $\lambda$-isomorphic to a subspace of $\mathcal F(X_\U)$. By Theorem \ref{th:isometry_lipfree}, we deduce that $\mathcal F(M)$ is $\lambda$-isomorphic to a subspace of $\mathcal F(X)_\U$. This means exactly that $\mathcal F(M)$ is $\lambda$-finitely representable in $\mathcal F(X)$.
\end{proof}

\begin{remark} Note that if $M$ and $N$ are bounded metric spaces satisfying that for every finite subset $F\subset M$ and every $\varepsilon>0$ there exists a function $f\colon F\to N$ such that 
\[(1+\varepsilon)^{-1} d(x,y)\leq d(\phi(x),\phi(y))\leq (1+\varepsilon)d(x,y) \quad \forall x,y\in F,\]
then a similar argument shows that $\mathcal F(M)$ is finitely representable in $\mathcal F(N)$. 
\end{remark}

We obtain some immediate consequences:

\begin{corollary} \label{cor:fr} Let $X$ and $Y$ be Banach spaces. Then $\mathcal F(X)$ is finitely representable in $\mathcal F(Y)$ in any of the following cases:
\begin{enumerate}[(a)]
    \item $X=\ell_2$ and $Y$ is any infinite-dimensional Banach space.
    \item $X=Y^{**}$ and $Y$ is any Banach space.
    \item $X=L_p([0,1])$ and $Y=\ell_p$, where $1\leq p<\infty$.
\end{enumerate}
\end{corollary}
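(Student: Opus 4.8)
The plan is to handle all three cases by the same two-step reduction: first verify that $X$ is finitely $1$-Lipschitz representable in $Y$, and then invoke Theorem~\ref{fr_metricspace} with $\lambda=1$ to conclude that $\mathcal F(X)$ is $1$-finitely representable, i.e. finitely representable, in $\mathcal F(Y)$. For the first step it is enough to recall the corresponding classical \emph{linear} finite-representability statements: indeed, if $Z$ is a finite-dimensional subspace of $X$ and $T\colon Z\to Y$ is a linear isomorphism onto its image with $\norm{T}\,\norm{T^{-1}}\leq 1+\varepsilon$, then for every finite subset $F\subset Z$ the restriction $T|_F\colon F\to Y$ satisfies $\dist(T|_F)\leq 1+\varepsilon$; since every finite subset of $X$ lies in some finite-dimensional subspace, linear finite representability of $X$ in $Y$ yields at once that $X$ is finitely $1$-Lipschitz representable in $Y$.

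It then remains to recall why $X$ is (linearly) finitely representable in $Y$ in each case. For (a), this is Dvoretzky's theorem: every infinite-dimensional Banach space $Y$ contains, for each $n\in\mathbb N$ and each $\varepsilon>0$, a subspace $(1+\varepsilon)$-isomorphic to $\ell_2^n$, and any finite-dimensional subspace of $\ell_2$ is itself a Hilbert space, hence isometric to some $\ell_2^k$. For (b), this is precisely the principle of local reflexivity of Lindenstrauss and Rosenthal, which asserts that $Y^{**}$ is finitely representable in $Y$. For (c), it is classical that $L_p([0,1])$ is isometrically finitely representable in $\ell_p$ for $1\leq p<\infty$: the span of the indicator functions of the $N$ intervals of a uniform partition of $[0,1]$ is isometric to $\ell_p^N\subset\ell_p$, and by density of step functions every finite-dimensional subspace of $L_p([0,1])$ is $(1+\varepsilon)$-isomorphic to a subspace of such an $\ell_p^N$. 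Feeding each of these into the reduction above and then into Theorem~\ref{fr_metricspace} finishes the proof.

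I do not expect a genuine obstacle here: once Theorem~\ref{fr_metricspace} is available the statement is a direct corollary, and the only point requiring a little care is that all the embeddings above have distortion $1+\varepsilon$ with $\varepsilon>0$ arbitrary rather than exactly $1$. But this is exactly what the definition of finitely $1$-Lipschitz representable (the case $\lambda=1$ of $\dist(\phi)\leq\lambda+\varepsilon$) allows, so no limiting argument beyond letting $\varepsilon\to 0$ in the conclusion of Theorem~\ref{fr_metricspace} is needed.
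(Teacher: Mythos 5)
Your proposal is correct and follows essentially the same route as the paper: reduce each case to classical linear finite representability (Dvoretzky, local reflexivity, $L_p$ in $\ell_p$), pass to finite $1$-Lipschitz representability (you prove the easy direction directly, where the paper cites the equivalence from Proch\'azka's result), and apply Theorem~\ref{fr_metricspace} with $\lambda=1$.
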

\begin{proof}
In each of the cases, we have that $X$ is finitely representable in $Y$. In fact, for (a) it is a consequence of Dvoretzky's theorem (see Theorem 6.15 in \cite{BST}). For (b), it is the principle of local reflexivity (see Theorem 6.3 in \cite{BST}) and (c) is part of Theorem 6.2 in \cite{BST}.
\end{proof}

\begin{corollary}
Let $X$ and $Y$ be Banach spaces such that $X$ coarsely Lipschitz embeds into $Y$. Then $\mathcal F(X)$ is crudely finitely representable in $\mathcal F(Y)$.
\end{corollary}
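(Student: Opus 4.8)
The plan is to reduce the coarse Lipschitz statement to a finite Lipschitz representability statement and then invoke Theorem~\ref{fr_metricspace}. Recall that a coarse Lipschitz embedding $f\colon X\to Y$ satisfies, for some constants $A,B,C,D>0$ and all $x,x'\in X$,
\[ A\,\|x-x'\| - C \leq \|f(x)-f(x')\| \leq B\,\|x-x'\| + D, \]
and the standard ``Lipschitz for large distances'' property then gives, for each $t>0$, constants so that the estimate is purely multiplicative once $\|x-x'\|\geq t$. The first step is therefore to rescale: for a fixed finite subset $F\subset X$ of diameter $\Delta$ and a fixed $\varepsilon>0$, consider $F_\rho:=\rho F$ for large $\rho>0$; applying $f$ to $F_\rho$ and dividing the metric on $Y$ by $\rho$, the additive errors $C,D$ become $C/\rho,\,D/\rho$, while the multiplicative constants are unchanged. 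Letting $\rho\to\infty$ along the net of finite sets shows that $X$ is finitely $\lambda$-Lipschitz representable in $Y$ with $\lambda=\limsup$ of the ratio of the multiplicative constants coming from $f$ (at worst $\lambda\le B/A$ after also optimizing over large-distance parameters).

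Once $X$ is crudely finitely Lipschitz representable in $Y$, Theorem~\ref{fr_metricspace} applied with that $\lambda$ gives that $\mathcal F(X)$ is $\lambda$-finitely representable in $\mathcal F(Y)$, which is exactly the assertion that $\mathcal F(X)$ is crudely finitely representable in $\mathcal F(Y)$. So the logical skeleton is: coarse Lipschitz embedding $\Rightarrow$ (by rescaling and the large-distance trick) crude finite Lipschitz representability $\Rightarrow$ (by Theorem~\ref{fr_metricspace}) crude finite representability of the free spaces.

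I would carry out the steps in this order: (1) record the two-sided coarse Lipschitz inequality and extract from it the large-distance multiplicative estimate (this is classical, essentially Lemma in Kalton--Randrianarivony style, but since we only need it for a single finite configuration the elementary rescaling suffices and no separate lemma is needed); (2) fix $F$ and $\varepsilon$, rescale $F$ by a large factor $\rho$ and the target metric by $1/\rho$ so the additive terms vanish in the limit, obtaining a map $F\to Y$ of distortion $\le \lambda+\varepsilon$; (3) conclude $X$ is finitely $\lambda$-Lipschitz representable in $Y$, hence crudely finitely Lipschitz representable; (4) invoke Theorem~\ref{fr_metricspace}.

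The main obstacle is step (2): one must be careful that rescaling a \emph{finite} subset is legitimate and that the additive constants $C,D$ (which do not scale) genuinely become negligible relative to $A\rho\cdot\operatorname{dist}$ and $B\rho\cdot\operatorname{dist}$ once $\rho$ is large enough, uniformly over the pairs in the fixed finite set $F$ (this uniformity is automatic since $F$ is finite, provided $F$ contains at least two distinct points, so $\min_{x\neq x'\in F}\|x-x'\|>0$). A minor additional point is that the distinguished point and the scaling factors $r_i$ in the ultrapower play no role here because $Y$ is a Banach space, so one may simply take $0\in Y$ and $r_i=1$ as in Theorem~\ref{th:finrepBan}; the value of $\lambda$ obtained is harmless since only \emph{crude} representability is claimed.
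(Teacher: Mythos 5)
Your argument is correct, but it takes a genuinely different route from the paper. The paper disposes of this corollary in one line by citing the coarse Lipschitz version of Ribe's theorem (Theorem 14.2.27 in \cite{AK}), which asserts that a coarse Lipschitz embedding of $X$ into $Y$ forces $X$ to be crudely finitely representable in $Y$ in the linear (Banach--Mazur) sense; combined with Theorem~\ref{fr_metricspace} this gives the conclusion. You instead prove directly, by the elementary rescaling $x\mapsto f(\rho x)/\rho$ on a fixed finite set $F$ with $\min_{x\neq x'\in F}\|x-x'\|>0$ and $\rho\to\infty$, that $X$ is finitely $(B/A)$-Lipschitz representable in $Y$ as a \emph{metric} space, which is already the exact hypothesis of Theorem~\ref{fr_metricspace}. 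This is a real simplification in context: you bypass both the cited linearization result (which hides a differentiation/ultrapower argument to pass from a Lipschitz to a linear embedding of finite-dimensional subspaces) and the paper's Proposition equating linear and Lipschitz finite representability for Banach spaces, at the mild cost of writing out the standard ``additive constants vanish under dilation'' estimate. The details you flag as the main obstacle are indeed the only ones needing care, and they go through exactly as you describe; note also that since distortion is invariant under rescaling either the domain or the target, even the division by $\rho$ in the target is optional. So the skeleton coarse Lipschitz $\Rightarrow$ crude finite Lipschitz representability $\Rightarrow$ Theorem~\ref{fr_metricspace} is sound and slightly more self-contained than the paper's citation.
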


\begin{proof}
That follows from Ribe's theorem (see Theorem 14.2.27 in \cite{AK}).
\end{proof}

\bigskip

\section{Some remarks on the cotype of Lipschitz-free spaces}\label{sec:cotype}

Not much is known about the Rademacher cotype of Lipschitz-free spaces. Bourgain proved (\cite{Bourgain}, see also Theorem~10.16 in \cite{Ostrovskii}) that $\mathcal F(\ell_1)$ has trivial cotype, but whether $\mathcal F(\mathbb R^n)$ has a nontrivial cotype is a long-standing open problem. Note that as a consequence of Corollary~\ref{cor:fr} the following dichotomy holds: 
\begin{enumerate}
    \item[(a)] $\mathcal F(\ell_2)$ has cotype; or
    \item[(b)] $\mathcal F(X)$ does not have cotype for any infinite-dimensional Banach space $X$.
\end{enumerate}

We obtain now some remarks concerning the cotype of $\mathcal F(M)$. Recall that 
the notion of metric cotype was introduced by Mendel and Naor in \cite{MN}. Note that if $M$ is a metric space such that $\mathcal F(M)$ has Rademacher cotype $q$, then $M$ also has metric cotype $q$. In particular, if $M=X$ is a Banach space then $X$ has Rademacher cotype $q$ (this follows directly from the fact that the metric cotype passes to subspaces and is equivalent to the usual cotype for Banach spaces). 

On the other hand, the cotype of $\mathcal F(M)$ is related to the metric type introduced by Bourgain, Milman and Wolfson in \cite{BMW}.

\begin{proposition}
Let $M$ be a metric space such that $\mathcal F(M)$ has Rademacher cotype. Then $M$ has BMW type. In particular, if $M=X$ is a Banach space then $X$ has Rademacher type.
\end{proposition}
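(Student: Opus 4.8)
The plan is to argue by contraposition and reduce to Bourgain's theorem that $\mathcal F(\ell_1)$ has trivial cotype. Assume $M$ has trivial BMW type, i.e.\ $M$ fails to have BMW type $p$ for every $p>1$; I will deduce that $\mathcal F(M)$ has trivial cotype. I would use two inputs: (i) the metric version of the Maurey--Pisier theorem \cite{BMW}, namely that a metric space with trivial BMW type has the property that $\ell_1$ is crudely finitely Lipschitz representable in it --- i.e.\ there is $D\geq 1$ so that every finite subset of $\ell_1$ biLipschitz embeds into $M$ with distortion at most $D$; and (ii) Bourgain's theorem \cite{Bourgain} that $\mathcal F(\ell_1)$ has trivial cotype.

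First I would note that (ii) already forces bad behaviour on finite subsets: writing $\ell_1=\overline{\bigcup_n A_n}$ with $(A_n)$ an increasing sequence of finite subsets containing $0$, each $\mathcal F(A_n)$ embeds isometrically into $\mathcal F(\ell_1)$ and $\bigcup_n\mathcal F(A_n)$ is dense there, so a uniform bound on the cotype-$q$ constants $C_q(\mathcal F(A_n))$ would force $\mathcal F(\ell_1)$ to have cotype $q$; hence $\sup_n C_q(\mathcal F(A_n))=\infty$ for every $q\in[2,\infty)$. By (i), each $A_n$ biLipschitz embeds into $M$ with distortion at most $D$, so by the linearization property of Lipschitz-free spaces $\mathcal F(A_n)$ embeds $D$-isomorphically into $\mathcal F(M)$, whence $C_q(\mathcal F(M))\geq D^{-1}C_q(\mathcal F(A_n))$ for all $n$. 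Thus $C_q(\mathcal F(M))=\infty$ for every $q$, i.e.\ $\mathcal F(M)$ has trivial cotype; this is the contrapositive of the first assertion.

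For the ``in particular'' statement: we have just shown that $X$ has non-trivial BMW type, and by the Bourgain--Milman--Wolfson theorem BMW type coincides with Rademacher type for Banach spaces, so $X$ has non-trivial Rademacher type. In the Banach case one can also bypass (i): were $X$ of trivial Rademacher type, the classical Maurey--Pisier theorem would make the $\ell_1^n$ uniformly finitely representable in $X$, hence (Theorem~\ref{fr_metricspace}) the $\mathcal F(\ell_1^n)$ uniformly finitely representable in $\mathcal F(X)$, contradicting (ii) since $\mathcal F(\ell_1)=\overline{\bigcup_n\mathcal F(\ell_1^n)}$. The main obstacle is input (i): one must extract a uniformly biLipschitz family of copies of the finite $\ell_1$-configurations inside $M$ from the merely averaged failure of the BMW-type inequalities --- this is the metric Maurey--Pisier phenomenon, and is the only non-formal step; the rest is bookkeeping with the functoriality of $\mathcal F(\cdot)$ and with the behaviour of cotype constants under isomorphic embeddings and dense unions. (A direct attempt, composing $f\colon\{-1,1\}^n\to M$ with $\delta$ and running the BMW inequality inside $\mathcal F(M)$, cannot work, since that inequality fails for general Banach spaces of non-trivial cotype; it is essential that the evaluation functionals $\delta(x)$ occupy a very special position in $\mathcal F(M)$, which is precisely what Bourgain's theorem exploits.)
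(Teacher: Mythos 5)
Your argument is correct and follows essentially the same route as the paper: both proceed by contraposition, extract $\ell_1$-structure in $M$ from the failure of BMW type via Theorem~2.6 of \cite{BMW}, and then invoke Bourgain's theorem. The two small differences are worth recording. First, the paper uses only what BMW Theorem~2.6 literally gives, namely uniformly biLipschitz copies of the Hamming cubes $\mathbb F_2^n$ in $M$, and then applies Bourgain's result in the quantitative form that the spaces $\mathcal F(\mathbb F_2^n)$ contain uniformly isomorphic copies of $\ell_\infty^m$; this lands the $\ell_\infty^m$'s directly inside $\mathcal F(M)$ and kills cotype at once. Your input (i) is stronger than BMW Theorem~2.6 as stated: you ask that \emph{every} finite subset of $\ell_1$ embed into $M$ with uniformly bounded distortion, not just the cubes. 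This is true, but it is not the literal content of the cited theorem; it follows from it because every finite subset of $\ell_1$ embeds with distortion $1+\varepsilon$ into a sufficiently large Hamming cube (truncate to $\ell_1^N$, rescale and round to integer coordinates, then encode each coordinate $j\in\{0,\dots,k\}$ as $j$ ones followed by $k-j$ zeros in $\{0,1\}^{k}$, which turns the $\ell_1$-distance into the Hamming distance). You should either add this reduction or simply work with the cubes, as the paper does. Second, where the paper reads off uniform copies of $\ell_\infty^m$, you instead run a density/cotype-constant argument to show $\sup_n C_q(\mathcal F(A_n))=\infty$ and transfer it through the $D$-isomorphic embeddings $\mathcal F(A_n)\hookrightarrow\mathcal F(M)$; this is correct (cotype is checked on finite tuples, so it passes to dense increasing unions, and constants degrade by at most the embedding distortion) and has the mild advantage of using Bourgain's theorem only in the qualitative form ``$\mathcal F(\ell_1)$ has trivial cotype.'' The ``in particular'' part, via BMW Corollary~5.9 (or alternatively via classical Maurey--Pisier and Theorem~\ref{fr_metricspace}), matches the paper.
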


\begin{proof}
Suppose that $M$ does not have BMW type. By Theorem 2.6 in \cite{BMW}, $M$ contains uniformly biLipschitz copies of the Hamming cubes $\mathbb F_2^n$. Bourgain's result mentioned earlier provides a constant $C\geq 1$ such that for all $m$ there exists $n$ such that $\mathcal F(\mathbb F_2^n)$ contains a $C$-isomorphic copy of $\ell^m_\infty$. Since the space $\mathcal F(M)$ contains $D$-isomorphic copies of the spaces $\mathcal F(\mathbb F_2^n)$ for some $D\geq 1$, it follows that $\mathcal F(M)$ contains $CD$-isomorphic copies of the spaces $\ell^m_\infty$. In particular, $\mathcal F(M)$ can not have cotype. If $M$ is Banach space then $M$ has BMW type if and only if $M$ has Rademacher type by Corollary~5.9 in \cite{BMW}.
\end{proof}

\begin{remark}
If $X$ is a Banach space such that $\mathcal F(X)$ has Rademacher cotype, then we can deduce easily from Theorem \ref{fr_metricspace} that $X$ has Rademacher type. In fact, if $X$ does not have Rademacher type then $\ell_1$ is finitely representable in $X$ and then $\mathcal F(\ell_1)$ is finitely representable in $\mathcal F(X)$. This is a contradiction since $\mathcal F(\ell_1)$ does not have Rademacher cotype.
\end{remark}

It is not known which metric spaces $M$ satisfy that $\mathcal F(M)$ and $\mathcal F(\mathcal F(M))$ are isomorphic (one example is Pe\l czy\'nski universal space, see \cite{GK}). The next result shows in particular that if $\mathcal F(M)$ has cotype then $\mathcal F(M)$ and $\mathcal F(\mathcal F(M))$ cannot be isomorphic.

\begin{corollary}
Let $M$ be an infinite metric space. Then $\mathcal F(\mathcal F(M))$ does not have Rademacher cotype.
\end{corollary}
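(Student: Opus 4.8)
The plan is to reduce the statement to the cotype-free nature of $\mathcal F(\ell_1)$ (equivalently, to Bourgain's embedding of $\ell_\infty^m$'s into free spaces over Hamming cubes) by exhibiting $\ell_1$ as finitely Lipschitz representable in $\mathcal F(M)$ whenever $M$ is infinite. The key observation is that an infinite metric space contains, for every $n$, an $n$-point subset, and the free space over such a subset is a finite-dimensional space; moreover, since $M$ is infinite, $\mathcal F(M)$ is infinite-dimensional, and one can extract copies of $\ell_1^n$ inside $\mathcal F(M)$ with controlled distortion. Indeed, it is a classical fact (going back to the structure of finitely supported measures) that $\mathcal F(M)$ contains $\ell_1^n$ for all $n$ with uniformly bounded distortion whenever $M$ is infinite — one way to see this is that if $M$ has an accumulation point one gets almost-isometric copies of $\ell_1^n$ from spikes concentrated near that point, and if $M$ is uniformly discrete and infinite one still gets uniformly bounded distortion from well-separated triples of points. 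Hence $\ell_1$ is crudely finitely representable in $\mathcal F(M)$.

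With that in hand, I would argue as follows. First, note $\ell_1$ is finitely Lipschitz representable in $\mathcal F(M)$: since $\mathcal F(M)$ is a Banach space containing $\ell_1^n$'s with uniformly bounded distortion, $\ell_1$ is crudely finitely representable in $\mathcal F(M)$ as a Banach space, and by the Proposition relating finite Lipschitz representability and finite representability for Banach spaces, $\ell_1$ is crudely finitely Lipschitz representable in $\mathcal F(M)$. Then, by Theorem~\ref{fr_metricspace} applied with the metric space $\ell_1$ and the Banach space $\mathcal F(M)$, we conclude that $\mathcal F(\ell_1)$ is crudely finitely representable in $\mathcal F(\mathcal F(M))$. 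Since $\mathcal F(\ell_1)$ does not have Rademacher cotype (Bourgain's theorem, as recalled in Section~\ref{sec:cotype}) and finite representability transfers the absence of cotype, it follows that $\mathcal F(\mathcal F(M))$ does not have Rademacher cotype.

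A cleaner variant, avoiding the crude-distortion bookkeeping, is to note that an infinite metric space always contains, for each $\varepsilon>0$ and each $n$, a subset that is $(1+\varepsilon)$-biLipschitz equivalent to a subset of $\ell_1^n$ (take $n$ points $x_1,\dots,x_n$ with $x_1=0$ and consider the map into $\mathcal F(M)$ sending $x_k\mapsto\delta(x_k)$; finite subsets of $\mathcal F(M)$ of this form are, up to $(1+\varepsilon)$, realized inside some $\ell_1^N$ because $\mathcal F$ of a finite metric space sits $(1+\varepsilon)$-isometrically in a finite-dimensional $\ell_1$-space — or more directly one uses that any $n$-point metric space embeds with distortion into $\ell_1$ and here the ambient $\ell_1$-structure of $\mathcal F$). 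Whichever route is taken, the main obstacle is precisely the quantitative claim that $\ell_1$ (or at least $\ell_1^n$'s with uniformly bounded distortion) is finitely Lipschitz representable in $\mathcal F(M)$ for an arbitrary infinite $M$; once that is established, the corollary is a one-line consequence of Theorem~\ref{fr_metricspace} together with Bourgain's result and the hereditary failure of cotype under finite representability. In fact, even the weaker statement that some infinite-dimensional space is finitely representable in $\mathcal F(M)$, combined with the dichotomy~(a)/(b) displayed above, would suffice: if $\mathcal F(M)$ is infinite-dimensional then $\ell_1^n$'s embed uniformly, so $\mathcal F(\ell_1)$ is finitely representable in $\mathcal F(\mathcal F(M))$, and $\mathcal F(\ell_1)$ has no cotype.
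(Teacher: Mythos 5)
Your argument is correct, but it follows the route of the Remark that immediately precedes this corollary in the paper rather than the paper's official proof. The paper argues through the Proposition on BMW type: if $\mathcal F(\mathcal F(M))$ had Rademacher cotype, then (applying that Proposition to the Banach space $\mathcal F(M)$) $\mathcal F(M)$ would have Rademacher type, which is impossible because $\mathcal F(M)$ contains an isomorphic copy of $\ell_1$. You instead note that $\ell_1$ is crudely finitely (Lipschitz) representable in $\mathcal F(M)$, push this through Theorem~\ref{fr_metricspace} to place $\mathcal F(\ell_1)$ crudely finitely representably inside $\mathcal F(\mathcal F(M))$, and invoke Bourgain's theorem together with the fact that crude finite representability transmits the failure of cotype. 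These are parallel derivations of the same implication ``$\mathcal F(X)$ has cotype $\Rightarrow X$ has type'': yours uses Pisier's characterization of trivial type via finite representability of $\ell_1$ plus the ultrapower machinery, while the paper's uses the equivalence of BMW type with Rademacher type plus the Hamming-cube argument; both ultimately rest on Bourgain's embedding of the $\ell_\infty^m$ into $\mathcal F(\mathbb F_2^n)$ and on the presence of $\ell_1$ inside $\mathcal F(M)$.

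Two caveats. First, the input you rightly single out as the main obstacle --- that the $\ell_1^n$ embed into $\mathcal F(M)$ with uniformly bounded distortion for every infinite $M$ --- is a genuine theorem (C\'uth, Doucha and Wojtaszczyk: $\mathcal F(M)$ contains an isomorphic copy of $\ell_1$ whenever $M$ is infinite), and your two-case sketch does not handle an infinite uniformly separated but unbounded subset, where the naive distortion bound of order $\diam/\theta$ grows with the number of points; the paper invokes the same fact without proof, so this is not a gap relative to the paper's own standard, but it should be cited rather than sketched. Second, your closing aside is off: knowing only that some infinite-dimensional Banach space is finitely representable in $\mathcal F(M)$ would, via Dvoretzky, yield $\mathcal F(\ell_2)$ finitely representable in $\mathcal F(\mathcal F(M))$, and whether $\mathcal F(\ell_2)$ has cotype is exactly the open problem behind the dichotomy (a)/(b); what makes the argument work is the specific $\ell_1$-structure of free spaces over infinite metric spaces, not infinite-dimensionality alone.
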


\begin{proof}
Suppose that $\mathcal F(\mathcal F(M))$ has cotype. It follows from the previous result that $\mathcal F(M)$ has type. This is impossible since $\mathcal F(M)$ contains an isomorphic copy of $\ell_1$.
\end{proof}

Aliaga, No\^us, Petitjean and Proch\'azka have proved recently in \cite{ANPP} that several isomorphic properties of $\mathcal F(X)$ (such as the Schur property and weak sequential completeness) are compactly determined. We finish the section by showing that this is also the case of the cotype. The proof adapts some ideas from \cite{Petitjean}. 

\begin{proposition}
Let $X$ be a Banach space and let $q\geq 2$. The following assertions are equivalent:
\begin{enumerate}
    \item[(i)] $\mathcal F(X)$ has Rademacher cotype (resp. cotype $q$);
    \item[(ii)] $\mathcal F(K)$ has Rademacher cotype (resp. cotype $q$)  for any (countable) compact set $K\subset X$;
    \item[(iii)] $\mathcal F(\{x_n\}_n)$ has Rademacher cotype (resp. cotype $q$) for any null sequence $(x_n)_n\subset X$.
\end{enumerate}
\end{proposition}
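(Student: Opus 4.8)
The proof proposal is to establish the cycle $(i)\Rightarrow(ii)\Rightarrow(iii)\Rightarrow(i)$. The implications $(i)\Rightarrow(ii)\Rightarrow(iii)$ are the easy direction: if $\mathcal F(X)$ has cotype $q$, then since $\mathcal F(K)$ embeds isometrically as a subspace of $\mathcal F(X)$ whenever $K\subset X$ contains $0$ (and cotype passes to subspaces and is insensitive to translating the base point, which only changes $\mathcal F$ by an isometry), $\mathcal F(K)$ has cotype $q$; and $(ii)\Rightarrow(iii)$ is immediate since the closure $K=\overline{\{x_n\}_n\cup\{0\}}$ of a null sequence is a countable compact set, and $\{x_n\}_n$ sits inside $K$. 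One should remark that cotype $q$ is inherited by subspaces with the same constant, so no constant is lost along these two implications.

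\textbf{The main implication.} The heart of the matter is $(iii)\Rightarrow(i)$, which I would prove by contraposition: assuming $\mathcal F(X)$ fails cotype $q$, I produce a null sequence $(x_n)_n\subset X$ with $\mathcal F(\{x_n\}_n)$ failing cotype $q$. The failure of cotype $q$ for $\mathcal F(X)$ means there are finite families of vectors witnessing arbitrarily bad cotype-$q$ inequalities; each such finite configuration lives in $\mathcal F(F)$ for some finite $F\subset X$ (by density of finitely supported elements). The plan is then to transfer each finite subset $F\subset X$ into a finite piece of a single null sequence. The key tool is finite Lipschitz representability: a finite metric space $F$ can be rescaled (multiplying the metric by a small factor $r>0$) to have arbitrarily small diameter, and a bounded finite set of small diameter sitting near $0$ behaves, up to a small perturbation, like a finite subset of a null sequence. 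More precisely, I would enumerate a sequence of finite "bad" configurations $F_1,F_2,\dots$, rescale $F_k$ by $r_k\to 0$ small enough and translate so that $F_k\subset X$ has diameter $<\varepsilon_k$ and lies within distance $<\varepsilon_k$ of some point $z_k$ with $\norm{z_k}\to 0$ and $\dist(z_k,z_j)$ large for $j\neq k$ (possible in an infinite-dimensional $X$; if $\dim X<\infty$ then $\mathcal F(X)=\mathcal F(\mathbb{R}^n)$ and the statement is about whether $\mathbb{R}^n$-free space has cotype, for which one can instead just dilate a single configuration — but more cleanly, handle the finite-dimensional case by noting $\{x_n\}_n$ can be taken inside a bounded region). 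Enumerating the points of all the $F_k$ gives a sequence $(x_n)_n$; choosing $\varepsilon_k\to 0$ fast enough forces $x_n\to 0$. Then $\mathcal F(\{x_n\}_n)$ contains, up to arbitrarily small distortion, an isometric copy of each $\mathcal F(F_k)$: this uses that the free space of a finite metric space depends continuously (in Banach–Mazur distance) on the metric, so the copy of $F_k$ inside the null sequence — being a $(1+\delta_k)$-distortion perturbation of the rescaled $F_k$ — has free space $(1+\delta_k)$-isomorphic to $\mathcal F(F_k)$ (here one invokes the Remark following Theorem~\ref{fr_metricspace}, or a direct argument). Hence $\mathcal F(\{x_n\}_n)$ contains uniformly the bad configurations, so it fails cotype $q$.

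\textbf{The main obstacle.} The delicate point is the uniformity: I need the isomorphism constant between $\mathcal F(F_k)$ and its copy sitting in $\mathcal F(\{x_n\}_n)$ to tend to $1$ (not merely stay bounded), since cotype $q$ requires the cotype-$q$ constant of the ambient space to be finite, and a bounded-but-nonconstant distortion would still allow each $\mathcal F(F_k)$ to appear with a uniformly bad-enough cotype constant — so actually a bounded distortion suffices to kill cotype $q$, but getting it genuinely bounded (uniformly over $k$) is where care is needed. The cleanest route is: first extract the bad configurations so that the cotype-$q$-violation ratio of configuration number $k$ is at least $k$; then it is enough that the $F_k$ embed into $\{x_n\}_n$ with distortion bounded by a fixed constant $C$, which degrades the violation ratio by at most a factor depending on $C$ and $q$, still tending to infinity. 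Arranging a fixed $C$ is feasible because one only needs each rescaled-and-translated copy of $F_k$ to be well-separated from the others and close enough to a distinguished point; a standard perturbation/separation argument in $X$ gives distortion $\le 1+\delta_k$ with $\delta_k\to 0$, which is even better. The bookkeeping — simultaneously controlling the scales $r_k$, the separations $\dist(z_k,z_j)$, the perturbations $\varepsilon_k$, and verifying $x_n\to 0$ — is the one genuinely technical part, but it is routine once the scheme is fixed; this is precisely the kind of argument carried out in \cite{Petitjean} and \cite{ANPP}, from which the ideas are adapted.
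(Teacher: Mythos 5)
Your proposal follows essentially the same route as the paper's proof: the implications $(i)\Rightarrow(ii)\Rightarrow(iii)$ are handled identically, and $(iii)\Rightarrow(i)$ is proved by contraposition, rescaling finite bad configurations so they live in small balls around the origin and collecting them into a single null sequence. However, the ``main obstacle'' you describe is illusory. Once a rescaled configuration $K_n$ is placed inside $2^{-n}B_X$, it is \emph{literally} a subset of the resulting set $K=\bigcup_n K_n\cup\{0\}$, and for any subset $N\subset M$ the space $\mathcal F(N)$ embeds isometrically into $\mathcal F(M)$ (by McShane extension of Lipschitz functions; a change of base point is also an isometry of free spaces). Hence there is no distortion to control at all: no translation to well-separated points $z_k$, no perturbation, no appeal to continuity of $\mathcal F$ in the Banach--Mazur distance, and no case distinction for $\dim X<\infty$. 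The cotype-$q$ violations transfer to $\mathcal F(K)$ with constant exactly $1$. The paper's argument is exactly this streamlined version: since dilations induce isometries of free spaces and finitely supported elements are dense, $\mathcal F(2^{-n}B_X)$ fails cotype $q$ for every $n$, so one extracts a finite $K_n\subset 2^{-n}B_X$ carrying a violation of ratio greater than $n$, and $K=\bigcup_n K_n\cup\{0\}$ is the desired null sequence. Your scheme would also work, but the separation and perturbation bookkeeping you flag as the technical heart of the proof is unnecessary.
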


\begin{proof}
The implications $(i)\implies(ii)\implies(iii)$ are trivial. Suppose that $\mathcal F(X)$ does not have Rademacher cotype (resp. cotype $q$). It follows that $\mathcal F(2^{-n}B_X)$ does not have cotype (resp. cotype $q$) for all $n\in\mathbb N$. In particular, for all $n\in\mathbb N$, there exists $m\in \mathbb N$ and $\mu_1^n,\ldots,\mu_m^n\in\mathcal F(2^{-n}B_X)$ such that
$$\left(\sum_{k=1}^m\|\mu_k^n\|^n\right)^{\frac{1}{n}}>n\int_{0}^{1}\left\|\sum_{k=1}^m\mu_k^nr_k(t)\right\|\,dt$$
$$\left(\text{resp.} \ \left(\sum_{k=1}^m\|\mu_k^n\|^q\right)^{\frac{1}{q}}>n\int_{0}^{1}\left\|\sum_{k=1}^m\mu_k^nr_k(t)\right\|\,dt\right).$$ Since the measures with finite support are dense in a Lipschitz-free space, we can and do suppose that $\mu_1^n,\ldots,\mu_m^n\in\mathcal F(K_n)$ where $K_n$ is a finite subset of $2^{-n}B_X$. Define $K=\bigcup_nK_n\cup\{0\}$. Then $K$ is a null sequence such that $\mathcal F(K)$ does not have Rademacher cotype (resp. cotype $q$).
\end{proof}

\begin{remark}
Since $\mathcal F(\ell_1)$ does not have Rademacher cotype, the previous theorem implies that there exists a null sequence $(x_n)_n$ in $\ell_1$ such that $\mathcal F(\{x_n\}_n)$ does not have cotype. Moreover, it is possible to explicite such a sequence. For $n\geq 1$, define $$x_n=\frac{1}{k^2}\left(r_1\left(\frac{m}{2^k}\right),...,r_k\left(\frac{m}{2^k}\right),0,0,...\right)$$ where $k$ and $m$ are such that $2^k-1\leq n< 2^{k+1}-1$ and $n=2^k-1+m$ with $0\leq m\leq 2^k-1$. Note that $\mathcal F(\mathbb F^k_2)=\mathcal F(\{x_n\}_{2^k-1\leq n< 2^{k+1}-1})$ isometrically since the metric space $\{x_n\}_{2^k-1\leq n< 2^{k+1}-1}$ is obtained by scaling the distance on $\mathbb F^k_2$. It follows that $\mathcal F(\mathbb F^k_2)$ is an isometric subspace of $\mathcal F(\{x_n\}_n)$ for all $k\geq 1$. So $\mathcal F(\{x_n\}_n)$ does not have cotype.
\end{remark}

\bigskip

\section{Stability of $\mathcal F(M)$ and $\Lip_0(M)$ under ultraproducts}\label{sec:stability}

Several classes of Banach spaces, as Banach lattices, C*-algebras and $C(K)$ spaces, are stable under ultraproducts \cite{Heinrich}. Given a metric space $M$ and an  ultrafilter $\U$, it is natural to ask if $\mathcal F(M)_{\U}$ is isomorphic to $\mathcal F(M_{\U})$ or more generally if there exists a metric space $N$ such that $\mathcal F(M)_{\U}$ is isomorphic to $\mathcal F(N)$. The first question is easily seen to be false with the following example:

\begin{example}
Let $M$ be an infinite proper metric space. Then $M_{\U}=M$ isometrically by Fact~\ref{facts}.e) whereas $\mathcal F(M)_{\U}$ is not separable. Thus, $\mathcal F(M_{\U})$ is not isomorphic to $\mathcal F(M)_{\U}$. 
\end{example}

In the first version of this paper, we provided some examples of metric spaces (as $M=[0,1]$ and $M=\mathbb N$) such that $\mathcal F(M)_{\U}$ is not isomorphic to a Lipschitz-free space, and we asked whether an analogous statement holds for every metric space. T. Kania has kindly provided an answer for a general metric space by strengthening our previous result.

\begin{proposition}\label{ultra-free}
Let $\mathcal U$ be a CI ultrafilter on an infinite set $I$, $M$ be a metric space and $X$ be an infinite-dimensional Banach space. Then $X_\U$ is not isomorphic to a subspace of $\mathcal F(M)$.
\end{proposition}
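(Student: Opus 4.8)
The statement asserts that $X_\U$ embeds into no $\mathcal F(M)$ when $\U$ is CI and $X$ is infinite-dimensional. The natural strategy is to produce a structural feature that every ultrapower $X_\U$ (with $X$ infinite-dimensional, $\U$ CI) must possess but that no Lipschitz-free space, nor any of its subspaces, can have. The prime candidate is a \emph{non-separable reflexive subspace}, or more precisely a non-separable weakly compact convex set. Indeed, since $X$ is infinite-dimensional it contains, for each $n$, an $n$-dimensional subspace, and a routine compactness/Dvoretzky-free argument shows $X_\U$ contains an isometric copy of $\ell_2^{(I)}$ or at least an infinite-dimensional reflexive (even Hilbertian, via Dvoretzky) subspace whose density character is uncontrollably large once $\U$ is CI; the key point is that CI forces $X_\U$ to be non-separable and the copy of a reflexive space scales up with it. So the first step is: exhibit inside $X_\U$ a non-separable subspace $R$ that is reflexive (equivalently, whose closed unit ball is weakly compact).

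\textbf{Key steps, in order.} First, I would recall the classical fact (Heinrich) that an infinite-dimensional Banach space $X$ has $\ell_p^n$'s or $\ell_\infty^n$'s finitely representable in it, and in particular — using that $\ell_2^n$ is finitely representable in every infinite-dimensional $X$ by Dvoretzky's theorem — deduce that $\ell_2$ is finitely representable in $X$, hence $\ell_2$ embeds isometrically into $X_\U$. But that only gives a separable reflexive subspace; to get non-separability I would instead work with the CI hypothesis directly: pick a decreasing sequence $(I_n)\subset\U$ with empty intersection, and for each $i\in I_n\setminus I_{n+1}$ choose an $n$-dimensional subspace $E_i\subset X$ that is $(1+1/n)$-isomorphic to $\ell_2^n$ (Dvoretzky). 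Then $(E_i)_\U\subset X_\U$ is isometric to a Hilbert space of density character equal to $|(\mathbb N)_\U|$ (or larger), which is uncountable since $\U$ is CI on an infinite set. This yields a non-separable reflexive subspace $R\subset X_\U$. Second, suppose for contradiction that $X_\U$ is (isomorphic to) a subspace of $\mathcal F(M)$. Then $R$ is a non-separable reflexive subspace of $\mathcal F(M)$. Third, I would invoke the structural result on Lipschitz-free spaces — this is exactly where one uses a theorem of Kalton (the one alluded to in the deleted example, ``$\mathcal F(M)$ cannot contain a non-separable weakly compact set'' / every weakly compact subset of $\mathcal F(M)$ is separable, or more precisely $\mathcal F(M)$ has the property that its reflexive subspaces are separable) — to reach a contradiction: $B_R$ is a non-separable weakly compact subset of $\mathcal F(M)$, which is impossible.

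\textbf{Main obstacle.} The delicate point is the last step: pinning down and correctly citing the property of Lipschitz-free spaces that forbids non-separable weakly compact sets. One has to be careful about whether the needed statement is ``every weakly compact set in $\mathcal F(M)$ is separable'' or the weaker ``$\mathcal F(M)$ contains no non-separable reflexive subspace,'' and to make sure the argument is not circular with the material on separability of $\mathcal F(M_\U)$. A secondary subtlety is verifying that the Hilbertian subspace $(E_i)_\U$ really is non-separable: one must check that $(E_i)_\U$ contains an isometric copy of $\ell_2(\Gamma)$ for uncountable $\Gamma$, which follows because the unit vectors of the $E_i$'s can be spread out so that an almost-orthonormal family of cardinality $|I|$ survives in the ultrapower — here CI is used to guarantee the dimensions $\dim E_i$ tend to infinity along $\U$, so no finite-dimensional collapse occurs. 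Once both halves are in place the contradiction is immediate, so the proof is short modulo the cited Kalton-type obstruction.
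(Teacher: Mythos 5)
Your proposal is correct and follows essentially the same route as the paper: use the CI hypothesis to pick a decreasing sequence $(I_n)_n\subset\U$ with empty intersection, apply Dvoretzky's theorem to find $(1+1/n)$-copies of $\ell_2^n$ in $X$ for $i\in I_n\setminus I_{n+1}$, observe that their ultraproduct is a non-separable Hilbert space isometrically embedded in $X_\U$, and conclude by Kalton's theorem that a Lipschitz-free space contains no non-separable weakly compact set. The two verifications you flag as delicate (non-separability of the ultraproduct of the $\ell_2^{n_i}$, and the precise form of Kalton's obstruction) are handled in the paper by citing Boyd--Transue and Kalton respectively, exactly as you anticipate.
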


\begin{proof}
Since $\mathcal U$ is CI, there exists a strictly decreasing sequence $(I_n)_n$ such that $\bigcap_{n\in\mathbb N}I_n=\emptyset$. Define $(a_i)_{i\in I}$ by $a_i=\frac{1}{n}$ and $n_i=n$ if $i\in I_n\setminus I_{n+1}$. By Dvoretzky's theorem, for all $i\in I$ there exists a subspace $X_i$ of $X$ and an isomorphism $T_i\colon\ell_2^{n_i}\to X_i$ such that $$\|x\|\leq\|T_i(x)\|\leq(1+a_i)\|x\|$$ for all $x\in\ell_2^{n_i}$. Now we define $T\colon(\ell_2^{n_i})_\mathcal U\to X_\mathcal U$ by $T((x_i)_\mathcal U)=(T_i(x))_\mathcal U$. Since $\lim_\mathcal U a_i=0$, the previous inequality implies that $T$ is an isometry. We have that the ultrapower of Hilbert spaces $(\ell_2^{n_i})_\mathcal U$ is also a Hilbert space and it is non-separable (see Theorem 3.1 in \cite{Boyd}). The conclusion follows from the fact that a Lipschitz-free space does not contain a non-separable weakly compact set \textcolor{red}{\cite{Kalton11}}.
\end{proof}

\begin{corollary} Let $\mathcal U$ be a CI ultrafilter and $M$ be a infinite metric space. Then $\mathcal F(M)_{\U}$ is not isomorphic to a subspace of a Lipschitz-free space.
\end{corollary}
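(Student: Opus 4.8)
The plan is to deduce this corollary from Proposition~\ref{ultra-free} by producing, inside $\mathcal{F}(M)_{\mathcal{U}}$, a copy of an ultrapower $X_{\mathcal{U}}$ of some infinite-dimensional Banach space. The obvious candidate is $X=\mathcal{F}(M)$ itself, which is infinite-dimensional since $M$ is infinite. So the goal reduces to: show that $\mathcal{F}(M)_{\mathcal{U}}$ contains an isomorphic (indeed isometric) copy of $\mathcal{F}(M)_{\mathcal{U}}$ — which is trivial — but then apply the proposition with the \emph{ambient} Lipschitz-free space playing the role of $\mathcal{F}(M)$ in the proposition's statement, i.e. run the argument by contradiction.

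**First I would** suppose, for contradiction, that $\mathcal{F}(M)_{\mathcal{U}}$ embeds isomorphically into $\mathcal{F}(N)$ for some metric space $N$. Since $M$ is infinite, $\mathcal{F}(M)$ is an infinite-dimensional Banach space, so Proposition~\ref{ultra-free} applies directly with $X=\mathcal{F}(M)$: it tells us that $\mathcal{F}(M)_{\mathcal{U}}$ is \emph{not} isomorphic to a subspace of $\mathcal{F}(N)$. This is the desired contradiction, and the proof is essentially one line once the hypotheses are lined up. The only point needing a remark is that $\mathcal{F}(M)$ is genuinely infinite-dimensional when $M$ is infinite — this is standard, since $\{\delta(x):x\in M\}$ is linearly independent in $\mathcal{F}(M)$ (the evaluation functionals $\delta(x_1),\dots,\delta(x_n)$ on distinct points are linearly independent, as one sees by choosing Lipschitz functions separating them).

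**The main obstacle** — such as it is — is purely bookkeeping: one must make sure that Proposition~\ref{ultra-free} is being quoted with the correct quantifiers. The proposition says that for a CI ultrafilter $\mathcal{U}$ and \emph{any} metric space $M'$ and \emph{any} infinite-dimensional $X$, the ultrapower $X_{\mathcal{U}}$ does not embed into $\mathcal{F}(M')$. In the corollary we take $X=\mathcal{F}(M)$ and let $M'=N$ range over all metric spaces; since a Lipschitz-free space is by definition $\mathcal{F}(N)$ for some $N$, the corollary follows. No genuinely new argument is required; the content is entirely in Proposition~\ref{ultra-free} (and, behind it, Kalton's theorem that Lipschitz-free spaces contain no non-separable weakly compact set). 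I would therefore write the proof as: ``Since $M$ is infinite, $\mathcal{F}(M)$ is infinite-dimensional. By Proposition~\ref{ultra-free}, $\mathcal{F}(M)_{\mathcal{U}}$ is not isomorphic to a subspace of $\mathcal{F}(N)$ for any metric space $N$, i.e.\ not isomorphic to a subspace of any Lipschitz-free space.''
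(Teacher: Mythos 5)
Your proof is correct and is exactly the intended deduction: the paper states this as an immediate corollary of Proposition~\ref{ultra-free}, applied with $X=\mathcal F(M)$ (infinite-dimensional since the points $\delta(x)$, $x\in M$, are linearly independent) and with the ambient metric space ranging over all $N$. Nothing further is needed.
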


Thanks to Gelfand-Naimark theorem, the ultrapower of $\mathcal C(K)$-spaces is still a $\mathcal C(K)$-space, i.e.~if $\mathcal U$ is an ultrafilter on a set $I$ and if $(K_i)_{i\in I}$ is a family of compact spaces, then there exists a compact space $K$ such that $(\mathcal C(K_i))_\U=\mathcal C(K)$ isometrically. Moreover, if there is an algebra isomorphism between $(\mathcal C(K_i))_\U$ and $\mathcal C(K)$ then $(K_i)_\U$ is homeomorphic to a dense subset of $K$ \cite{Heinrich}. The following result is the analogue for $\Lip_0(K)$.

\begin{proposition}
Let $K$ be a compact metric space. Let $\mathcal U$ be an ultrafilter on a set $I$ and let $(M_i)_{i\in I}$ be a family of uniformly bounded metric spaces. If there exists an algebra isomorphism between $(\Lip_0(M_i))_\U$ and $\Lip_0(K)$, then $(M_i)_\U$ is biLipschitz equivalent to a subset of $K$.
\end{proposition}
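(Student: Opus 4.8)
The plan is to imitate, in the Lipschitz category, the Gelfand-theoretic proof of the quoted statement for $\mathcal C(K)$: one reconstructs the metric space from the character space of the algebra and transports it across the isomorphism. Write $D:=\sup_i\diam(M_i)<\infty$. The first thing to record is that, since all the spaces involved are bounded, the pointwise product of two base-point-preserving Lipschitz functions is again one (on a space of diameter $\le D$ one has $\norm{fg}\le 2D\norm f\,\norm g$), so $\Lip_0(M_i)$, $\Lip_0((M_i)_\U)$ and $\Lip_0(K)$ are commutative algebras, and $A:=(\Lip_0(M_i))_\U$ is a commutative Banach algebra under the coordinatewise product (with the equivalent submultiplicative norm $2D\norm{\cdot}$). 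A one-line check with representatives shows that the operator $T\colon A\to\Lip_0((M_i)_\U)$ of Theorem~\ref{th:ultrLip} is an algebra homomorphism, being a coordinatewise $\U$-limit. Finally, $\Lip_0(K)$ is semisimple, so the given algebra isomorphism $\psi\colon A\to\Lip_0(K)$ is automatically continuous with continuous inverse; set $C:=\norm{\psi}\,\norm{\psi^{-1}}$.

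The heart of the argument is the identification of the characters. On any bounded pointed metric space $N$, each evaluation $\delta(x)$ with $x\in N\setminus\{0\}$ restricts to a character of $\Lip_0(N)$. When $N=K$ is compact, these are \emph{all} of them: given a character $\chi$ of $\Lip_0(K)$, extend it to the unital algebra $\mathrm{Lip}(K)=\Lip_0(K)\oplus\R\cdot 1$ by $\tilde\chi(f):=\chi(f-f(0)\cdot 1)+f(0)$, a unital character; since for a compact metric space the maximal ideal space of $\mathrm{Lip}(K)$ is $K$ (classical, see e.g.~\cite{Weaver}, or argue directly via ``$\tilde\chi(f)\in f(K)$'' and compactness), $\tilde\chi=\delta(\kappa)$ for some $\kappa\in K$, necessarily $\kappa\neq 0$. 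Hence the characters of $\Lip_0(K)$ are exactly $\{\delta(\kappa):\kappa\in K\setminus\{0\}\}$, with $\kappa$ uniquely determined.

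Granting this, I would define $\Theta\colon(M_i)_\U\to K$ by $\Theta(0):=0$ and, for $\bx=(x_i)_\U\neq0$, letting $\chi_{\bx}:=\delta(\bx)\circ T$ --- a nonzero character of $A$, since $\chi_{\bx}\big((d(\cdot,0_i))_\U\big)=\dU(\bx,0)>0$ --- so that $\chi_{\bx}\circ\psi^{-1}$ is a character of $\Lip_0(K)$, hence equals $\delta(\Theta(\bx))$ for a unique $\Theta(\bx)\in K\setminus\{0\}$. To see $\Theta$ is $C$-biLipschitz, I would use the identity $d(x,y)=\sup\{\,|f(x)-f(y)|:f\in B_{\Lip_0(N)}\,\}$, attained at $f=d(\cdot,y)-d(0,y)$. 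Applying it in $K$, and noting $g(\Theta(\bx))=\chi_{\bx}(\psi^{-1}(g))$ together with the inclusions $\norm{\psi}^{-1}B_A\subset\psi^{-1}(B_{\Lip_0(K)})\subset\norm{\psi^{-1}}B_A$, one gets $d_K(\Theta(\bx),\Theta(\by))$ comparable, up to the constant $C$, to $\sup\{\,|\chi_{\bx}(h)-\chi_{\by}(h)|:h\in B_A\,\}$; and this last supremum equals $\dU(\bx,\by)$, as seen by choosing the representative $h_i=d(\cdot,y_i)-d(0_i,y_i)$ coordinatewise (with $|h_i(x_i)-h_i(y_i)|\le d(x_i,y_i)$ giving the matching upper bound). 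The base point is treated identically via $g(0)=0$.

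The hard part is the second paragraph: recognising that one should pass through the maximal ideal space, and importing that, for compact metric $K$, the Lipschitz algebra is natural so that its characters are precisely the point evaluations. The remaining subtlety is that a bare algebra isomorphism controls the norm only up to equivalence --- which is exactly why the conclusion yields a \emph{biLipschitz} copy of $(M_i)_\U$ rather than an isometric one, and, in contrast with the $\mathcal C(K)$ case, only a subset of $K$ rather than a dense subset.
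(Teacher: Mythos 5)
Your proof is correct and follows essentially the same route as the paper's: you transport the evaluation functionals $\delta(\bx)\circ T$ across the isomorphism, identify the resulting multiplicative functionals on $\Lip_0(K)$ with point evaluations (the paper cites Weaver's Lemma~7.28, where you unitize and use the maximal ideal space of $\operatorname{Lip}(K)$), and get the two-sided estimate with constant $\norm{\psi}\,\norm{\psi^{-1}}$ --- where the paper invokes the $1$-norming property from Theorem~\ref{th:ultrLip}, you use the explicit witnesses $d(\cdot,y_i)-d(0_i,y_i)$, which is the same computation. Your extra remarks (the algebra structure on the ultraproduct, multiplicativity of $T$, automatic continuity of the isomorphism via semisimplicity, which for the inverse needs the observation that semisimplicity transfers through the algebra isomorphism) are correct additions that the paper leaves implicit.
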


\begin{proof}
Let $R\colon\Lip_0(K)\to(\Lip_0(M_i))_\U$ be an algebra isomorphism. If $(x_i)_\U\in(M_i)_\U$, we can define a functional $F_{(x_i)_\U}\in\Lip_0(K)^*$ by $$F_{(x_i)_\U}(f)=\lim_\U f_i(x_i)$$ for all $f\in\Lip_0(K)$ where $(f_i)_\U=R(f)$. In other words, we have $F_{(x_i)_\U}(f)=TR(f)((x_i)_\U)$ where $T\colon (\Lip_0(M_i))_\U\to (\Lip_0(M_i)_\U)$ is the operator defined in Theorem \ref{th:ultrLip}. It is clear that $F_{(x_i)_\U}$ is also multiplicative. By Lemma 7.28 in \cite{Weaver}, $F_{(x_i)_\U}$ is an evaluation, that is there exists a unique $h((x_i)_\U)\in K$ such that $F_{(x_i)_\U}=\delta_{h((x_i)_\U)}$. This allows to define a map $h\colon (M_i)_\U\to K$, we will show this is the biLipschitz map we are looking for.

Let $(x_i)_\U,(y_i)_\U\in (M_i)_\U$. We have that
\begin{align*}
d(h((x_i)_\U),h((y_i)_\U))&=\|\delta_{h((x_i)_\U)}-\delta_{h((y_i)_\U)}\| \\
&=\|F_{(x_i)_\U}-F_{(y_i)_\U}\| \\
&=\sup_{f\in B_{\Lip_0(K)}}|F_{(x_i)_\U}(f)-F_{(y_i)_\U}(f)| \\
&=\sup_{f\in B_{\Lip_0(K)}}|TR(f)((x_i)_\U)-TR(f)((y_i)_\U)|
\end{align*}
It follows that on the one hand:
$$d(h(x_i)_\U,h(y_i)_\U)\leq \sup_{f\in B_{\Lip_0(K)}} \norm{TR(f)}d((x_i)_\U, (y_i)_\U)\leq \norm{R}d((x_i)_\U, (y_i)_\U).$$
On the other hand, taking $\varepsilon>0$, there exists $(f_i)_\U\in\Lip_0(M_i)_\U$ such that $\|\delta_{(x_i)_\U}-\delta_{(y_i)_\U}\|=\langle T((f_i)_\U),\delta_{(x_i)_\U}-\delta_{(y_i)_\U}\rangle$ and $\|(f_i)_\U\|\leq 1+\varepsilon$ by Theorem \ref{th:ultrLip}. Let $g\in\Lip_0(K)$ such that $R(g)=(f_i)_\U$ and note that $\|g\|\leq(1+\varepsilon)\|R^{-1}\|$. It follows that
\begin{align*}
d((x_i)_\U,(y_i)_\U)&=\|\delta_{(x_i)_\U}-\delta_{(y_i)_\U}\|\\
&=\langle TR(g),\delta_{(x_i)_\U}-\delta_{(y_i)_\U}\rangle\\
&=(1+\varepsilon)\|R^{-1}\|\left\langle TR\left(\frac{g}{(1+\varepsilon)\|R^{-1}\|}\right),\delta_{(x_i)_\U}-\delta_{(y_i)_\U}\right\rangle\\
&\leq (1+\varepsilon)\|R^{-1}\|\sup_{f\in B_{\Lip_0(K)}}|\<TR(f),\delta_{(x_i)_\U}-\delta_{(y_i)_\U}\>|\\
&=(1+\varepsilon)\|R^{-1}\|\sup_{f\in B_{\Lip_0(K)}}|TR(f)((x_i)_\U)-TR(f)((y_i)_\U)|\\
&=(1+\varepsilon)\|R^{-1}\|d(h(x_i)_\U,h(y_i)_\U),
\end{align*}
and since $\varepsilon$ was arbitrary, we obtain that $d((x_i)_\U,(y_i)_\U)\leq \|R^{-1}\|d(h(x_i)_\U,h(y_i)_\U)$. Then we deduce that $h$ is biLipschitz. 
\end{proof}

We finish the paper remarking that the analogy with the case of ultraproducts $C(K)$-spaces is not complete. Indeed, the map $h$ constructed in the proof above does not have dense range, in general. For instance, assume $M_i=M$ is a compact metric space. Then we have $T\circ R(f)= f\circ h$ for each $f\in \Lip_0(K)$, that is, $T\circ R$ is the composition operator $C_h\colon \Lip_0(K)\to \Lip_0(M)$. Since $R$ is an isomorphism and $T$ is not injective (by Proposition \ref{prop:unifdisc}) we get that $C_h$ is not injective. It follows (see Proposition 2.25 in \cite{Weaver}) that $h(M)=\overline{h(M)}$ is properly contained in $K$.

\subsection*{Acknowledgements} The authors are very grateful to M.~Raja for suggesting this research topic and fruitful conversations, and to A.~Avil\'es and A.~Proch\'azka for their useful comments. We are also very grateful to T.~Kania for allowing us to include his proof of Proposition~\ref{ultra-free}.

The research of L.~Garc\'ia-Lirola was supported by the grants MTM2017-83262-C2-2-P and Fundaci\'on S\'eneca Regi\'on de Murcia 20906/PI/18.

The research of G.~Grelier was supported by the Grants of Ministerio de Econom\'ia, Industria y Competitividad MTM2017-83262-C2-2-P; Fundaci\'on S\'eneca Regi\'on de Murcia 20906/PI/18; and by MICINN 2018 FPI fellowship with reference PRE2018-083703, associated to grant MTM2017-83262-C2-2-P.

\end{document}